\documentclass[11pt,reqno]{amsart}
\usepackage[margin=3cm]{geometry}
\usepackage{color}               
\usepackage{faktor}
\usepackage{tikz}
\usepackage{tikz-cd}
\usepackage{spverbatim}
\usepackage{float}
\usepackage[pdftex,hyperfootnotes=false,colorlinks=false,hypertexnames=false]{hyperref}
\usepackage{makecell}
\usepackage{multicol}

\usepackage{amsmath,amssymb,anyfontsize,enumerate,stmaryrd,mathtools, thmtools,tikz,txfonts,nccmath}
\usepackage[oldstyle]{libertine}%
\usepackage[T1]{fontenc}
\usepackage[final]{microtype}
\usepackage[utf8]{inputenc}
\usepackage{csquotes}
\makeatletter
\renewcommand*\libertine@figurestyle{LF}
\makeatother
\usepackage[libertine,libaltvw,liby]{newtxmath}
\makeatletter
\renewcommand*\libertine@figurestyle{OsF}
\makeatother
\usepackage[noerroretextools,backend=biber,style=alphabetic,maxalphanames=5,giveninits,sorting=nyt,%
maxbibnames=99]{biblatex}
\addbibresource{references.bib}

\usepackage{tikz-cd}
\usepackage[noabbrev,capitalise]{cleveref}
\usepackage{autonum}
\usepackage{braket}

\theoremstyle{plain}
    \newtheorem{theorem}{Theorem}[section]
    \newtheorem{construction/theorem}[theorem]{Construction/Theorem}

    \newtheorem{proposition}[theorem]{Proposition}
    
\theoremstyle{definition}
    \newtheorem{remark}[theorem]{Remark}
    
    \newtheorem{example}[theorem]{Example}
    \newtheorem{definition}[theorem]{Definition}

\newcommand{\comment}[1]{}

\title{Universal piecewise polynomiality for counting curves in toric surfaces}
\author[M.~A.~Hahn]{Marvin Anas Hahn}
\address{M.~A.~Hahn: School of Mathematics, 17 Westland Row, Trinity College Dublin, Dublin 2, Ireland}
\email{hahnma@maths.tcd.ie}
\author[V.~Reda]{Vincenzo Reda}
\address{V.~Reda: School of Mathematics, 17 Westland Row, Trinity College Dublin, Dublin 2, Ireland}
\email{redav@tcd.ie}

\subjclass[2020]{14N10,14T90,14N35}
\keywords{Floor diagrams, Gromov--Witten invariants, bosonic Fock space, tropical geometry}

\thanks{\textbf{Acknowledgements.} We thank Hannah Markwig for many valuable discussions.}

\begin{document}
\begin{abstract}
Inspired by piecewise polynomiality results of double Hurwitz numbers, Ardila and Brugallé introduced an enumerative problem which they call 
\textit{double Gromov--Witten invariants of Hirzebruch surfaces} in \cite{ardila2017double}. These invariants serve as a two-dimensional analogue and satisfy a similar piecewise polynomial structure. More precisely, they introduced the enumeration of curves in Hirzebruch surfaces satisfying point conditions and tangency conditions on the two parallel toric boundaries. These conditions are stored in four partitions and the resulting invariants are piecewise polynomial in their entries. Moreover, they found that these expressions also behave polynomially with respect to the parameter determining the underlying Hirzebruch surfaces. Based on work of Ardila and Block \cite{ardila2013universal}, they proposed that such a polynomiality could also hold while changing between more general toric surfaces corresponding to $h$-transverse polygons. In this work, we answer this question affirmatively. Moreover, we express the resulting invariants for $h$-transverse polygons as matrix elements in the two-dimensional bosonic Fock space.
\end{abstract}

\maketitle

\tableofcontents

\section{Introduction}\label{intro}
Double Gromov--Witten invariants of Hirzebruch surfaces were introduced by Ardila and Brugallé in \cite{ardila2017double} as a two-dimensional generalisation of so-called \textit{double Hurwitz numbers}. While double Hurwitz numbers count branched morphisms to the Riemann sphere with two relative conditions describing the ramification data, this two-dimensional analogue enumerates curves in a Hirzebruch surface passing through a certain number of points and having prescribed tangency order at two distinguished toric boundary divisors .\\
Double Hurwitz numbers admit a piecewise polynomial structure and tropical geometry has proved to be a powerful framework in the study of the polynomiality of Hurwitz numbers \cite{cavalieri2010tropical,cavalieri2011wall,hahn2022tropical,hahn2020wall,hahn2022twisted,fitzgerald2023combinatorics}.
A key idea in these tropical approaches is to express Hurwitz numbers as a weighted enumeration of abstract graphs. For the enumeration of curves in Hirzebruch surfaces, \textbf{Floor diagrams} provide a two-dimensional analogue and were introduced in \cite{brugalle2008floor,brugalle2016floor}. They allow to translate the enumerative problem in terms of a weighted count of abstract decorated graphs. Motivated by this and by storing the tangency orders in partitions, Ardila and Brugallé used tropical geometry to show that double Gromov--Witten invariants of Hirzebruch surfaces admit a piecewise polynomial behaviour in the entries of those partitions that closely mirrors the piecewise polynomiality of double Hurwitz numbers in the entries of the ramification profiles. Moreover, they observed that the piecewise polynomiality extends when moving between different Hirzebruch surfaces. To be precise, recall that Hirzebruch surfaces $\mathbb{F}_k$ live in a one-dimensional family indexed by natural numbers $k\ge0$. The polynomials describing double Gromov--Witten invariants of Hirzebruch surfaces are also polynomial in the parameter $k$. Based on this observation, Ardila and Brugallé asked in \cite[Section 7]{ardila2017double} whether this polynomial interpolation between Hirzebruch surfaces extends to enumerations of curves in more complicated toric surfaces -- an analogous statement for Severi degrees of toric surfaces was derived in \cite{ardila2013universal,liu2018severi}. In this work, we answer this question affirmatively for the important family of toric surfaces corresponding to \textit{$h$-transverse polygons}.

\begin{definition}\label{def1}
A polygon $P$ is said to be $h$-transverse if every vertex has integer coordinates and every edge has slope $0$, $\infty$ or $\frac1k$, with $k\in\mathbb{Z}\setminus\{0\}$.
\end{definition}

The enumerative geometry of these toric surfaces may be studied via the aforementioned floor diagrams, making this class a suitable choice for our purposes. Indeed, in this work we take the same approach employed in \cite{ardila2017double}.

\subsection{Double Gromov--Witten invariants for $h$-transverse polygons}

We now set-up our counting problem of \textit{double Gromov--Witten invariants for $h$-transverse polygons}. To begin with, we parametrise $h$-transverse polygons.\\
Let $P$ be an $h$-transverse polygon. We will always assume that $P$ has two edges of slope $0$, a top edge and a bottom edge. We denote by $d^t>0$ the lattice length of the top edge and $d^b>0$ the lattice length of the bottom edge. Moreover, we record the direction of the edges on the left of $P$ by $(c_1^l,-1),\dots,(c_m^l,-1)$ in counterclockwise order. Similarly, we record the direction of the edges on the right of $P$ by $(c_1^r,-1),\dots,(c_n^r,-1)$ in a clockwise manner. Note that $c_i^l$ and $c_j^r$ are integers. Moreover, we have $c_1^l<\dots<c_m^l$ and $c_1^r>\dots>c_n^r$. In addition, we denote the lattice length of the side corresponding to $c_i^l$ by $d_i^l$ and the lattice length of $c_j^r$ by $d_j^r$. We define partitions $\textbf{c}^l=(c_1^l,\dots,c_m^l)$, $\textbf{c}^r=(c_1^r,\dots,c_n^r)$, $\textbf{d}^l=(d_1^l,\dots,d_m^l)$ and $\textbf{d}^r=(d_1^r,\dots,d_m^r)$. Denoting $\textbf{c}=(\textbf{c}^r;\textbf{c}^l)$ and $\textbf{d}=(d^t;\textbf{d}^r;\textbf{d}^l)$, we see that $\textbf{c}$ and $\textbf{d}$ completely determine the $h$-transverse polygon $P$ and we will denote $P=P(\textbf{c},\textbf{d})$. Moreover, we denote the toric surface corresponding to $P(\textbf{c},\textbf{d})$ by $S(\textbf{c})$.

\begin{remark}\label{rmk2}
    Since the normal fan of $P$ is balanced, we obtain the following relations:
    \begin{equation}
        d^t+\sum_{i=1}^nc_i^rd_i^r-d^b-\sum_{j=1}^mc_j^ld_j^l=0
    \end{equation}
    \begin{equation}
        \sum_{i=1}^nd_i^r=\sum_{j=1}^md_j^l.
    \end{equation}
\end{remark}

\begin{example}
\label{example2}
    An important class of $h$-transverse polygons are the polygons corresponding to Hirzebruch surfaces, that are obtained from this construction by setting $n=m=1$, $c^r=k\geq0$ and $c^l=0$, see \cref{fig-hirzebruch}

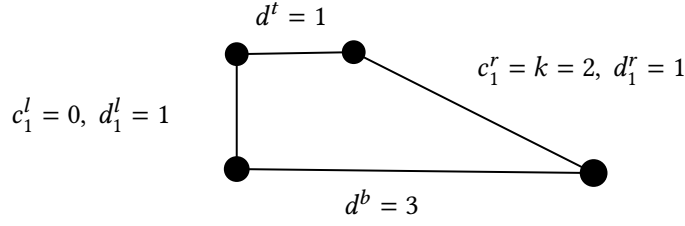
\begin{figure}
    \centering
    \tikzset{every picture/.style={line width=0.75pt}} 

\begin{tikzpicture}[x=0.75pt,y=0.75pt,yscale=-1,xscale=1]

\draw   (210,91) -- (331,152) -- (151,150) -- (151,92) -- cycle ;
\draw  [fill={rgb, 255:red, 0; green, 0; blue, 0 }  ,fill opacity=1 ] (145.5,92) .. controls (145.5,88.96) and (147.96,86.5) .. (151,86.5) .. controls (154.04,86.5) and (156.5,88.96) .. (156.5,92) .. controls (156.5,95.04) and (154.04,97.5) .. (151,97.5) .. controls (147.96,97.5) and (145.5,95.04) .. (145.5,92) -- cycle ;
\draw  [fill={rgb, 255:red, 0; green, 0; blue, 0 }  ,fill opacity=1 ] (204.5,91) .. controls (204.5,87.96) and (206.96,85.5) .. (210,85.5) .. controls (213.04,85.5) and (215.5,87.96) .. (215.5,91) .. controls (215.5,94.04) and (213.04,96.5) .. (210,96.5) .. controls (206.96,96.5) and (204.5,94.04) .. (204.5,91) -- cycle ;
\draw  [fill={rgb, 255:red, 0; green, 0; blue, 0 }  ,fill opacity=1 ] (145,150) .. controls (145,146.69) and (147.69,144) .. (151,144) .. controls (154.31,144) and (157,146.69) .. (157,150) .. controls (157,153.31) and (154.31,156) .. (151,156) .. controls (147.69,156) and (145,153.31) .. (145,150) -- cycle ;
\draw  [fill={rgb, 255:red, 0; green, 0; blue, 0 }  ,fill opacity=1 ] (324.5,152) .. controls (324.5,148.41) and (327.41,145.5) .. (331,145.5) .. controls (334.59,145.5) and (337.5,148.41) .. (337.5,152) .. controls (337.5,155.59) and (334.59,158.5) .. (331,158.5) .. controls (327.41,158.5) and (324.5,155.59) .. (324.5,152) -- cycle ;

\draw (36,112.4) node [anchor=north west][inner sep=0.75pt]    {$c_{1}^{l} =0,\ d_{1}^{l} =1$};
\draw (271,91.4) node [anchor=north west][inner sep=0.75pt]    {$c_{1}^{r} =k=2,\ d_{1}^{r} =1$};
\draw (159,64.4) node [anchor=north west][inner sep=0.75pt]    {$d^{t} =1$};
\draw (204,158.4) node [anchor=north west][inner sep=0.75pt]    {$d^{b} =3$};

\end{tikzpicture}

    \caption{The polygon corresponding to the Hirzebruch surface $\mathbb{F}_2$.}
    \label{fig-hirzebruch}
\end{figure}

\end{example}

\begin{example}\label{example1}
    For $\textbf c=(\textbf c^r;\textbf c^l)=(3,1,-3;-1,0)$ and $\textbf d=(d^t; \textbf d^r; \textbf d^l)=(2;1,2,1;2,2)$, one obtains the $h$-transverse polygon in \cref{fig-htransverse}.\\

    \begin{figure}
    \tikzset{every picture/.style={line width=0.75pt}} 

\begin{tikzpicture}[x=0.75pt,y=0.75pt,yscale=-1,xscale=1]

\draw  [color={rgb, 255:red, 0; green, 0; blue, 0 }  ,draw opacity=1 ][line width=1.0]  (300,61) -- (390,91) -- (451,150) -- (361,181) -- (181,181) -- (180,120) -- (240,61) -- cycle ;
\draw  [fill={rgb, 255:red, 0; green, 0; blue, 0 }  ,fill opacity=1 ] (234,61) .. controls (234,57.69) and (236.69,55) .. (240,55) .. controls (243.31,55) and (246,57.69) .. (246,61) .. controls (246,64.31) and (243.31,67) .. (240,67) .. controls (236.69,67) and (234,64.31) .. (234,61) -- cycle ;
\draw  [fill={rgb, 255:red, 0; green, 0; blue, 0 }  ,fill opacity=1 ] (294,60.75) .. controls (294,57.3) and (296.8,54.5) .. (300.25,54.5) .. controls (303.7,54.5) and (306.5,57.3) .. (306.5,60.75) .. controls (306.5,64.2) and (303.7,67) .. (300.25,67) .. controls (296.8,67) and (294,64.2) .. (294,60.75) -- cycle ;
\draw  [fill={rgb, 255:red, 0; green, 0; blue, 0 }  ,fill opacity=1 ] (174,120) .. controls (174,116.69) and (176.69,114) .. (180,114) .. controls (183.31,114) and (186,116.69) .. (186,120) .. controls (186,123.31) and (183.31,126) .. (180,126) .. controls (176.69,126) and (174,123.31) .. (174,120) -- cycle ;
\draw  [fill={rgb, 255:red, 0; green, 0; blue, 0 }  ,fill opacity=1 ] (175,180.75) .. controls (175,177.3) and (177.8,174.5) .. (181.25,174.5) .. controls (184.7,174.5) and (187.5,177.3) .. (187.5,180.75) .. controls (187.5,184.2) and (184.7,187) .. (181.25,187) .. controls (177.8,187) and (175,184.2) .. (175,180.75) -- cycle ;
\draw  [fill={rgb, 255:red, 0; green, 0; blue, 0 }  ,fill opacity=1 ] (355,180.5) .. controls (355,176.91) and (357.91,174) .. (361.5,174) .. controls (365.09,174) and (368,176.91) .. (368,180.5) .. controls (368,184.09) and (365.09,187) .. (361.5,187) .. controls (357.91,187) and (355,184.09) .. (355,180.5) -- cycle ;
\draw  [fill={rgb, 255:red, 0; green, 0; blue, 0 }  ,fill opacity=1 ] (444.75,150) .. controls (444.75,146.55) and (447.55,143.75) .. (451,143.75) .. controls (454.45,143.75) and (457.25,146.55) .. (457.25,150) .. controls (457.25,153.45) and (454.45,156.25) .. (451,156.25) .. controls (447.55,156.25) and (444.75,153.45) .. (444.75,150) -- cycle ;
\draw  [fill={rgb, 255:red, 0; green, 0; blue, 0 }  ,fill opacity=1 ] (384,90.75) .. controls (384,87.3) and (386.8,84.5) .. (390.25,84.5) .. controls (393.7,84.5) and (396.5,87.3) .. (396.5,90.75) .. controls (396.5,94.2) and (393.7,97) .. (390.25,97) .. controls (386.8,97) and (384,94.2) .. (384,90.75) -- cycle ;

\draw (66,139.4) node [anchor=north west][inner sep=0.75pt]    {$c_{2}^{l} =0,\ d_{2}^{l} =2$};
\draw (256,189.4) node [anchor=north west][inner sep=0.75pt]    {$d^{b} =6$};
\draw (84,64.9) node [anchor=north west][inner sep=0.75pt]    {$c_{1}^{l} =-1,\ d_{1}^{l} =2$};
\draw (335,46.4) node [anchor=north west][inner sep=0.75pt]  [font=\small]  {$c_{1}^{r} =3,\ d_{1}^{r} =1$};
\draw (435,102.4) node [anchor=north west][inner sep=0.75pt]    {$c_{2}^{r} =1,\ d_{2}^{r} =2$};
\draw (408,176.4) node [anchor=north west][inner sep=0.75pt]    {$c_{3}^{r} =-3,\ d_{3}^{r} =1$};
\draw (245,29.4) node [anchor=north west][inner sep=0.75pt]    {$d^{t} =2$};

\end{tikzpicture}
\caption{The $h$-transverse polygon associated to the data in \cref{example1}.}
\label{fig-htransverse}
\end{figure}
\end{example}

Let $P(\textbf{c},\textbf{d})$ an $h$-transverse polygon as above. Furthermore, consider $g\ge0$ and four sequences $\alpha,\beta,\tilde{\alpha},\tilde{\beta}$ with

\begin{equation}
    \sum_{i}i(\alpha_i+\beta_i)=d^b\quad\textrm{and}\quad\sum_{i}i(\tilde{\alpha}_i+\tilde{\beta}_i)=d^t.
\end{equation}

Let $D_b$ and $D_t$ the toric divisors corresponding to the bottom and top edge of $P(\textbf{c},\textbf{d})$ respectively. The enumerative problem, we study in this article counts genus $g$ curves in $S(\textbf{c})$ passing through an appropriate number of points and satisfying tangency conditions at the toric divisors $D_b$ and $D_t$. The tangency conditions at $D_b$ are determined by $\alpha$ and $\beta$, while $\tilde\alpha$ and $\tilde{\beta}$ determine the conditions at $D_t$.  We define $a=\displaystyle\sum_{i=1}^nd_i^r=\sum_{j=1}^md_j^l$ and $l=2a+g+\sum_{i\ge1}(\beta_i+\tilde{\beta}_i)-1$. Finally, we say that a curve in $S(\textbf{c})$ with Newton polygon $P(\textbf{c},\textbf{d})$ has multidegree $\textbf{d}$.

Now, we are ready to define double Gromov--Witten invariants for $h$-transverse polygons.

\begin{definition}
\label{def-count}
In the set-up above, we consider a generic configuration
\begin{equation}
    \omega=\bigcup_{i\geq0}\left((q_j^i)_{j\leq\alpha_i}\cup(\tilde q_j^i)_{j\leq\tilde\alpha_i}\right)\cup(p_j)_{j\leq l},
\end{equation}
where $q_j^i\in D_b,\tilde{q}_j^i\in D_t$ and $p_i\in S(\textbf{c})\backslash(D_t\cup D_b)$.

We denote by $N_{\textbf c,g}^{\alpha,\beta,\tilde\alpha,\tilde\beta}(\textbf{d})$ the number of irreducible complex algebraic  curves $C$ of multidegree $\textbf{d}$ in $S(\textbf{c})$ of genus $g$ such that:

\begin{itemize}
    \item $C$ passes through all the points $q^i_j$, $\tilde q^i_j$ and $p_i$;
    \item $C$ has order of contact $i$ with $D_b$ at $q^i_j$, and has $\beta_i$ other non-prescribed points with order of contact $i$ with $D_b$;
    \item $C$ has order of contact $i$ with $D_t$ at $\tilde q^i_j$, and has $\tilde\beta_i$ other non-prescribed points with order of contact $i$ with $D_t$.
\end{itemize}

This number is finite and independent of the chosen generic configuration of points and is referred to as the \textit{double Gromov-Witten invariant} of $S(\textbf{c})$. When we allow reducible curves, we denote the resulting invariant by $N_{\textbf c,g}^{\alpha,\beta,\tilde\alpha,\tilde\beta,\bullet}(\textbf{d})$.
\end{definition}

\subsection{Results}\label{res}
Our main objective is to study the polynomiality properties of the invariant $N_{\textbf c,g}^{\alpha,\beta,\tilde\alpha,\tilde\beta}(\textbf{d})$. For this, we first reparametrise the invariant and interpret it as a function.

Let $\textbf{d}^l,\textbf{d}^r$ as above and $n_1,n_2\ge0$. Then, we define
\begin{equation}
\Lambda=\left\{(x_1,\dots,x_{n_1},y_1,\dots,y_{n_2})\in\mathbb{Z}^{n_1}\times\mathbb{Z}^{n_2}\bigg|\sum_{i=1}^{n_1}x_i+\sum_{j=1}^{n_2}y_j+\sum_{i=1}^nc_i^rd_i^r-\sum_{j=1}^mc_j^ld_j^l=0\right\}.
\end{equation}
For $(\textbf{x},\textbf{y})\in\Lambda$, we associate a tuple $(\alpha,\beta,\tilde{\alpha},\tilde{\beta})$ as follows: \begin{multicols}{2}
\begin{itemize}
    \item $\alpha_i$ is the number of elements $x_j=-i$;
    \item $\beta_i$ is the number of elements $y_j=-i$;
    \item $\tilde\alpha_i$ is the number of elements $x_j=i$;
    \item $\tilde\beta_i$ is the number of elements $y_j=i$.
\end{itemize}
\end{multicols} 
Obviously $(\textbf{x},\textbf{y})$ and $(\alpha,\beta,\tilde{\alpha},\tilde{\beta})$ determine each other up to a permutation of the entries of $(\textbf{x},\textbf{y})$. In what follows, we consider two maps

\begin{align}\label{eq3}
    F_{(\textbf{d}^r;\textbf{d}^l),\textbf c,g}^{n_1,n_2}:&\quad\Lambda\quad\longrightarrow\quad\mathbb Z\qquad\qquad\qquad F_{(\textbf{d}^r;\textbf{d}^l),g}^{n_1,n_2}:\quad\Lambda\times\mathbb Z^{n+m}\quad\longrightarrow\quad\mathbb Z\\
    &(\textbf{x},\textbf{y})\>\>\longmapsto\quad N_{\textbf c,g}^{\alpha,\beta,\tilde\alpha,\tilde\beta}(\textbf d)\qquad\qquad\qquad\quad(\textbf{x},\textbf{y},\textbf{c})\>\>\longmapsto\quad N_{\textbf c,g}^{\alpha,\beta,\tilde\alpha,\tilde\beta}(\textbf d).
\end{align}

In order to study the polynomiality properties of $ F_{(\textbf{d}^r;\textbf{d}^l),\textbf c,g}^{n_1,n_2}$, consider the hyperplane arrangement in $\Lambda$ consisting of all hyperplanes

\begin{equation}
        \sum_{i\in S}x_i+\sum_{j\in T}y_j+\sum_{i=1}^nc_i^rk_i-\sum_{j=1}^mc_j^lt_j=0
    \end{equation}
    \begin{equation}
        y_i-y_j=0\qquad 1\leq i<j\leq n_2
\end{equation}
where $S\subseteq[n_1]$, $T\subseteq[n_2]$, $0\leq k_i\leq d_i^r$ for $i=1,\dots,n$ and $0\leq t_j\leq d_j^l$ for $j=1,\dots,m$. We denote this hyperplane arrangement by $\mathcal{H}^{n_1,n_2}(\textbf{c})$. We call the elements of $\mathcal{H}^{n_1,n_2}(\textbf{c})$ walls and the connected components of $\Lambda\backslash\mathcal{H}^{n_1,n_2}(\textbf{c})$ \textit{chambers}. Furthermore, we define $\tilde{\mathcal{H}}^{n_1,n_2}\subset\Lambda\times\mathbb Z^{n+m}$ to be the intersection of $\mathcal{H}^{n_1,n_2}(\textbf{c})$ and $\{\textbf{c}=(\textbf{c}^r,\textbf{c}^l)\in\mathbb Z^{n+m}|c_1^r>\dots>c_n^r,\>c_1^l<\dots<c_m^l\}$. $\tilde{\mathcal{H}}^{n_1,n_2}$ is a hyperplane arrangement in $\Lambda\times\mathbb Z^{n+m}$.

The following is the main result of this work.

\begin{theorem}\label{thm1}
    Let $(\textbf d^r;\textbf d^l)$ be a vector with positive integer coordinates and $g\geq0,n_1,n_2>0$ fixed integers and $\textbf c=(\textbf c^r;\textbf c^l)\in\mathbb Z^{n+m}$ such that $c_1^r>\dots>c_n^r$ and $c_1^l<\dots<c_m^l$. The function $F_{(\textbf d^r;\textbf d^l),\textbf c,g}^{n_1,n_2}(\textbf x,\textbf y)$ of double Gromov-Witten invariants of the toric surface $S(\textbf c)$ is polynomial in each chamber of $\Lambda\backslash\mathcal{H}^{n_1,n_2}(\textbf{c})$. Furthermore, if we let the vector $\textbf{c}\in\mathbb Z^{n+m}$ vary, then the function $F_{(\textbf d^r;\textbf d^l),g}^{n_1,n_2}(\textbf x,\textbf y,\textbf c)$ depending also on the toric surface $S(\textbf{c})$ is polynomial in each chamber of $\tilde{\mathcal{H}}^{n_1,n_2}$.
\end{theorem}

As a direct application of the same methods of the proof of \cite[Theorem 1.4]{ardila2017double}, we note also the following result.

\begin{theorem}\label{thm2}
    Each polynomial piece of $F_{(\textbf{d}^r;\textbf{d}^l),\textbf c,g}^{n_1,n_2}(\textbf x,\textbf y)$ has degree $n_2+3g+2a-2$, and is either even or odd.
\end{theorem}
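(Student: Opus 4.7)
The plan is to work directly with the floor diagram formula for $F_{(\textbf d^r,\textbf d^l),\textbf c,g}^{n_1,n_2}(\textbf x,\textbf y)$ that underlies the proof of \cref{thm1}. Inside a fixed chamber of $\Lambda\setminus\mathcal H^{n_1,n_2}(\textbf c)$ the set of contributing diagrams is constant, and the invariant is a sum $\sum_\Gamma \mu(\Gamma)$ where each multiplicity $\mu(\Gamma)$ is a product of integer linear forms in $(\textbf x,\textbf y)$, one factor for each bounded elevator edge (together with the standard genus-refinement factors attached to floors of positive genus). Both assertions of \cref{thm2} will then follow if I show that (i) the number of such linear factors is a topological invariant of the diagram equal to $n_2+3g+2a-2$, and (ii) a suitable involution sends a floor diagram for $(\textbf x,\textbf y)$ to one for $(-\textbf x,-\textbf y)$ with each edge weight negated.

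For the degree statement I would carry out an Euler characteristic computation on a generic floor diagram $\Gamma$. Such a diagram has $2a$ total vertical floor-ends (by the balancing identity of \cref{rmk2}), $n_1$ unbounded elevators carrying the marked $x$-variables, $n_2$ unbounded elevators carrying the $y$-variables, and additional unbounded elevators accounting for the entries of $\alpha,\beta,\tilde\alpha,\tilde\beta$. The genus relation $\#(\text{edges})-\#(\text{vertices})+1=g$ (combined with the constraint that each floor carries a prescribed set of markings tied to $l=2a+g+\sum_i(\beta_i+\tilde\beta_i)-1$) forces the number of bounded elevators, plus the combinatorial contribution of the positive-genus floors, to evaluate to exactly $n_2+3g+2a-2$. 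Because each bounded elevator contributes a degree-one factor in $(\textbf x,\textbf y)$ and the higher-genus refinement of a floor of genus $g_v$ contributes a polynomial factor of degree $3g_v$, the sum $3\sum_v g_v = 3g$ appears naturally and accounts for the $3g$ term in the claimed degree.

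For the parity statement I would exploit the involution $\iota$ on floor diagrams obtained by reflecting the $h$-transverse polygon $P(\textbf c,\textbf d)$ about a horizontal axis: this swaps $D_t\leftrightarrow D_b$, exchanges $(\alpha,\beta)\leftrightarrow(\tilde\alpha,\tilde\beta)$, and (after relabeling) sends a diagram contributing to $F(\textbf x,\textbf y)$ in a chamber $C$ to a diagram contributing to $F(-\textbf x,-\textbf y)$ in the reflected chamber $-C$. Under $\iota$ every elevator weight flips sign, so $\mu(\iota(\Gamma))=(-1)^d\mu(\Gamma)$ with $d=n_2+3g+2a-2$ by the count above. Since $\iota$ is a bijection on contributing diagrams, the polynomial piece $P_C$ satisfies $P_C(-\textbf x,-\textbf y)=(-1)^d P_C(\textbf x,\textbf y)$, which is exactly the assertion that $P_C$ is purely even or purely odd in total degree according to the parity of $n_2+g$.

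The main obstacle I anticipate is making the edge count in step (i) genuinely precise. Floors of positive genus contribute factors whose degree is not simply ``one per bounded edge,'' and the passage from the combinatorial data of $\Gamma$ (number of floors, number of elevators, genera assignments, marked points from $\omega$) to the clean target $n_2+3g+2a-2$ requires juggling the balancing relations of \cref{rmk2}, the formula for $l$, and the constraint that exactly $l$ interior marked points must lie on the diagram. Once this bookkeeping is done carefully, both claims of \cref{thm2} will drop out in parallel.
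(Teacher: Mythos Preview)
Your proposal has a genuine gap rooted in a misreading of the floor diagram model used in this paper. In \cref{def-floordiag} there are no ``floors of positive genus'' carrying extra refinement factors: the genus of a diagram is purely the first Betti number of the underlying graph, and the multiplicity is simply $\mu(\mathcal D)=\prod_e w(e)$ over the internal edges. By \cref{prop2} the number of internal edges is $n_2+2(g+a-1)$, not $n_2+3g+2a-2$. For $g>0$ the edge weights are \emph{not} all determined as linear forms in $(\textbf x,\textbf y)$; instead, for a fixed underlying graph $G$ the admissible weight functions form the $g$-dimensional flow polytope $\Phi_G(\textbf k)$, and the contribution of $G$ is the \emph{sum} $\sum_{\textbf w\in\Phi_G(\textbf k)}\pi_{\text{int}}(\textbf w)$. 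The missing $g$ in the degree comes precisely from this lattice-point summation (Ehrhart theory, via \cref{thm5}): summing a degree-$d$ polynomial over a $g$-dimensional lattice polytope gives a polynomial of degree $d+g$. Your degree-$3g_v$ ``refinement factors'' are not part of this picture and cannot substitute for the polytope argument.

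The parity argument has the same blind spot and an additional one. First, reflecting $P(\textbf c,\textbf d)$ about a horizontal axis does not preserve $S(\textbf c)$: the slopes $c_i^r,c_j^l$ change sign and reorder, so your involution relates invariants of \emph{different} toric surfaces, not the polynomial $F_{(\textbf d^r,\textbf d^l),\textbf c,g}^{n_1,n_2}$ to itself. Second, even granting a correct involution, the sign picked up under $(\textbf x,\textbf y)\mapsto(-\textbf x,-\textbf y)$ is not governed by a simple product of linear edge weights once $g>0$: one must compare the lattice-point sums over $\Phi_G(\textbf k)$ and $\Phi_G(-\textbf k)$, which is exactly what weighted Ehrhart reciprocity (\cref{wreciprocity}) does. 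The paper's proof uses reciprocity to produce the factor $(-1)^g$ from the polytope dimension and $(-1)^{n_2+2(g+a-1)}$ from the parity of $\pi_{\text{int}}$, combining to $(-1)^{n_2+3g+2a-2}$. Without this tool your argument does not close for $g>0$.
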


\begin{remark}
    For the special case of $S(\textbf{c})$ a Hirzebruch surface, \cref{thm1} was proved in \cite[Theorem 1.3]{ardila2017double} and \cref{thm2} was proved in \cite[Theorem 1.4]{ardila2017double}.
\end{remark}

The results in \cref{thm1,thm2} establish the piecewise polynomiality of $N_{\textbf c,g}^{\alpha,\beta,\tilde\alpha,\tilde\beta}(\textbf d)$ and determine the parity of the involved polynomials for a fixed toric surface. In particular, the second part of \cref{thm1} shows that $N_{\textbf c,g}^{\alpha,\beta,\tilde\alpha,\tilde\beta}(\textbf d)$ also behaves polynomially while changing the underlying toric surface, answering the aforementioned question posed in \cite{ardila2017double} affirmatively. This motivates our choice of the word \textit{universal} in the title: the piecewise polynomiality of these invariants persists while changing across a family of toric surfaces.

Finally, in \cref{sec-fockspace}, we explore the relation between the invariants $N_{\textbf c,g}^{\alpha,\beta,\tilde\alpha,\tilde\beta}(\textbf d)$ and the bosonic Fock space. The bosonic Fock space is a vector space with a basis indexed by tuples of partitions. The bosonic Fock space carries a natural inner product. The two--dimensional Heisenberg algebra is a family of operators acting on the bosonic Fock space. Employing the inner product, we may associate to an operator in the Heisenberg algebra a \textbf{vaccuum expectation}. Building on previous work, linking Floor diagrams to this formalism (see e.g. \cite{block2016fock,cavalieri2021counting}), we construct operators whose vaccuum expectations are equal to our invariants $N_{\textbf c,g}^{\alpha,\beta,\tilde\alpha,\tilde\beta}(\textbf d)$. The precise statement may be found in \cref{thm:fockspace}.

\section{Floor diagrams}\label{sec-floordiag}

We introduce floor diagrams following the notation in \cite{ardila2013universal} and \cite{ardila2017double}. Let $\textbf{d}=(d^t;\textbf{d}^r;\textbf{d}^l)$ be a vector with positive integer coordinates and $\textbf{c}=(\textbf{c}^r;\textbf{c}^l)\in\mathbb Z^n\times\mathbb Z^m$ such that $c_1^r>\dots>c_n^r$ and $c_1^l<\dots<c_m^l$. Let $P(\textbf{c},\textbf{d})$ be a $h$-transverse polygon and $S(\textbf{c})$ the corresponding toric surface. We denote by $D_r$ and $D_l$ two multisets containing the directions of the right and left sides respectively:

\begin{equation}
    D_r=\{\underbrace{c_1^r,\dots,c_1^r}_{d_1^r-times},\dots,\underbrace{c_n^r,\dots,c_n^r}_{d_n^r-times}\}\qquad D_l=\{\underbrace{c_1^l,\dots,c_1^l}_{d_1^l-times},\dots,\underbrace{c_m^l,\dots,c_m^l}_{d_m^l-times}\}.
\end{equation}
Note that $|D_r|=|D_l|=a$. Let $r=(r_1,\dots,r_a)$ and $l=(l_1,\dots,l_a)$ be permutations of $D_r$ and $D_l$ respectively.

\begin{definition}\label{def-floordiag}
    A \textbf{marked floor diagram} $\mathcal D$ \textbf{of multidegree d} for $S(\textbf{c})$ is a tuple $(V,E,w)$, such that
    \begin{enumerate}
        \item The vertex set $V$ is decomposed as $V=L\cup C\cup R$. Moreover, we have that $C$ is totally ordered from left to right, while $L=\{\tilde q_1,\dots,\tilde q_{\tilde k}\}$ is unordered and to the left of $C$, and $R=\{q_1,\dots,q_k\}$ is unordered and to the right of $C$.
        \item The vertices in $V$ are coloured black, white and grey. {Moreover, every vertex in $L$ and $R$ is white and there are $a$ black vertices in $C$.}
        \item The set of edges $E$ is directed from left to right, such that:
        \begin{itemize}
            \item the resulting graph is connected;
            \item every white vertex has valency one and is connected to precisely one black vertex;
            \item every grey vertex has valency two with one incoming and one outgoing edge, each connecting it to a black vertex;
            \item {there is no black-black edge.}
        \end{itemize}
        \item We have a map
        \begin{equation}
            w\colon E\to\mathbb{Z}_{>0}
        \end{equation}
       such that if we define the divergence of $v$ to be 
        $$\text{div}(v)=\sum_{e:v\to v'}w(e)-\sum_{e:v'\to v}w(e)$$ 
        then
        \begin{itemize}
            \item $\text{div}(B_i)=r_i-l_i$, where $B_i$ is the $i$-th black vertex in $\mathcal D$, for all $i=1,\dots,a$;
            \item $\text{div}(v)=0$ for every grey vertex $v$.
        \end{itemize}
        \item {We define the \textbf{divergence multiplicity vector} to be the vector of sequences $(\alpha,\beta,\tilde\alpha,\tilde\beta)$ such that:
        \begin{multicols}{2}
        \begin{itemize}
            \item $\alpha_i$ is the number of vertices $v\in R$ such that $\text{div}(v)=-i$; 
            \item $\beta_i$ is the number of white vertices $v\in C$ such that $\text{div}(v)=-i$;
            \item $\tilde\alpha_i$ is the number of vertices $v\in L$ such that $\text{div}(v)=i$;
            \item $\tilde\beta_i$ is the number of white vertices $v\in C$ such that $\text{div}(v)=i$;
        \end{itemize}
        \end{multicols}
        Then the multidegree \textbf{d} satisfies the following equation
        \begin{equation}            
            \sum_ii(\alpha_i+\beta_i)=d^t+\sum_{i=1}^nd_i^rc_i^r-\sum_{j=1}^md_j^lc_j^l\quad\text{and}\quad\sum_ii(\tilde\alpha_i+\tilde\beta_i)=d^t.
        \end{equation}}
    \end{enumerate}
\end{definition}

{We associate to a marked floor diagram $\mathcal{D}$ the following quantities:}

\begin{itemize}
    \item {the \textbf{type} of $\mathcal{D}$ is the pair $(n_1,n_2)$, where $n_1$ is the number of white vertices in $L\cup R$ and $n_2$ is the number of white vertices in $C$.
    \item The \textbf{divergence sequence} of $\mathcal D$ is a vector $(\textbf{x},\textbf{y})\in\mathbb Z^{n_1}\times\mathbb Z^{n_2}$ of length $n_1+n_2$ defined as follows  
    \begin{itemize}
        \item $\textbf{x}=(\text{div}(\tilde q_1),\dots,\text{div}(\tilde q_{\tilde k}),\text{div}(q_1),\dots,\text{div}(q_k))$ the sequence of divergences of white vertices in $L$ and $R$. We will often refer to \textbf{x} as left-right sequence;
        \item $\textbf{y}$ the sequence of divergences of white vertices in $C$, listed from left to right.
    \end{itemize}
    Note that the vector $(\textbf{x},\textbf{y})$ satisfies the following equation:
    \begin{equation}
    \sum_{i=1}^{n_1}x_i+\sum_{j=1}^{n_2}y_j=\sum_{j=1}^mc_j^ld_j^l-\sum_{i=1}^nc_i^rd_i^r=d^t-d^b
    \end{equation}
    since the divergences of $\mathcal D$ sum to zero. Furthermore, $(\textbf{x},\textbf{y})$ determines\\
    $(\alpha,\beta,\tilde\alpha,\tilde\beta)=(\alpha(\textbf{x}),\beta(\textbf{y}),\tilde\alpha(\textbf{x}),\tilde\beta(\textbf{y}))$ and viceversa (up to a permutation of the entries of the vector $(\textbf{x},\textbf{y})$).}
    \item We define the \textbf{genus} $g(\mathcal D)$ of $\mathcal D$ to be the first Betti number of the underlying graph, i.e. $g(\mathcal D)=1-|V|+|E|$. {By definition, the genus of $\mathcal D$ can be rewritten as $g(\mathcal D)=1-a+v_g$, where $v_g$ is the number of grey vertices.}
    \item An edge is \textbf{internal} if it connects two vertices of $C$. We define the \textbf{multiplicity} $\mu(\mathcal D)$ of $\mathcal{D}$ as
    \begin{equation}
        \mu(\mathcal{D})=\prod w(e)
    \end{equation}
    where the product runs over all internal edges. 
\end{itemize}

\begin{remark}
    {It is worth emphasising that, in this setup, the divergence of the black vertices of a given marked floor diagram depends on the permutations $r$ and $l$, whereas in the case treated by Ardila and Brugallé in \cite{ardila2017double}, it is constant.} 
\end{remark}

\begin{remark}\label{rmk-internal_edges}
    {Note that from the definition immediately follows that a marked floor diagram $\mathcal{D}$ of multidegree $\textbf{d}$ for $S(\textbf{c})$ of type $(n_1,n_2)$ has $n_2+2(g+a-1)$ internal edges.}
\end{remark}

{From this point forward, we will refer to \textit{marked floor diagrams} as \textit{floor diagrams} for convinience. Now, we state the correspondence theorem for our enumerative problem.}

\begin{theorem}\label{thm4}
    Let $\textbf d=(d^t;\textbf d^r;\textbf d^l)$ be {a vector of positive integer numbers}, $g\geq0$ an integer and \textbf{x} a vector with coordinates in $\mathbb{Z}\setminus\{0\}$. We write $\alpha(\textbf x)=\alpha$ and $\tilde\alpha(\textbf x)=\tilde\alpha$. Then, for any two sequences of non-negative integer numbers $\beta=(\beta_i)_{i\geq1}$ and $\tilde\beta=(\tilde\beta_i)_{i\geq1}$ such that
    \begin{equation}
        \sum_ii(\alpha_i+\beta_i)=d^t+\sum_{i=1}^nc_i^rd_i^r-\sum_{j=1}^mc_j^ld_j^l\quad\text{and}\quad\sum_ii(\tilde\alpha_i+\tilde\beta_i)=d^t,
    \end{equation}
    one has
    \begin{equation}
        N_{\textbf c,g}^{\alpha,\beta,\tilde\alpha,\tilde\beta}(\textbf d)=\sum_{\mathcal D}\mu(\mathcal D)
    \end{equation}
    where the sum runs over all floor diagrams $\mathcal D$ of multidegree \textbf d, genus $g$, left-right sequence \textbf x, and divergence multiplicity vector $(\alpha,\beta,\tilde\alpha,\tilde\beta)$ for $S(\textbf c)$.
\end{theorem}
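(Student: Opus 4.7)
The plan is to prove this correspondence theorem in the standard two-step fashion: first reduce the algebraic count to a tropical count via a correspondence theorem, and then use a floor decomposition argument to identify the tropical curves with the floor diagrams of \cref{def-floordiag}.

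First, I would invoke Mikhalkin's correspondence theorem in the form that incorporates tangency conditions along toric boundary divisors (as developed in the works of Mikhalkin, Nishinou--Siebert, Shustin, and as used in \cite{ardila2017double} for the Hirzebruch case). This expresses $N_{\textbf c,g}^{\alpha,\beta,\tilde\alpha,\tilde\beta}(\textbf d)$ as a weighted sum over parametrised tropical curves in $\mathbb R^2$ with Newton polygon $P(\textbf c,\textbf d)$ and genus $g$, passing through the tropicalisations of the configuration $\omega$, where unbounded ends in direction $(0,-1)$ have weights prescribed by $\alpha$ (fixed on $D_b$) and $\beta$ (free on $D_b$), and analogously for the top edge. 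The weight of each tropical curve is the product of Mikhalkin vertex multiplicities over its trivalent vertices.

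Second, I would place the interior point conditions $p_1,\dots,p_l$ in a vertically stretched configuration, so that their $x$-coordinates are much more spread out than their $y$-coordinates. Following the floor decomposition technique of Brugallé--Mikhalkin (and its implementation for $h$-transverse polygons in \cite{ardila2013universal}), every tropical curve in the count then decomposes into \emph{floors}, each contained in a thin vertical strip of integer width one, joined by long horizontal \emph{elevator} edges. The $h$-transverse hypothesis forces each floor to be supported on a single unit-width strip of $P(\textbf c,\textbf d)$, so that the slopes of its non-horizontal edges on the left and right are determined by one entry of $D_l$ and one entry of $D_r$, respectively. Hence there are exactly $a$ floors, matching the $a$ black vertices of \cref{prop2}.

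Third, I would read off a marked floor diagram $\mathcal D$ from each such stretched tropical curve: black vertices are floors (ordered left to right by strip); grey vertices are interior point conditions $p_i$ lying in the middle of an elevator (a 2-valent constraint forces the two adjacent elevator pieces to have equal weight, realising $\mathrm{div}=0$); white vertices in $C$ are interior points $p_i$ on an elevator end adjacent to a floor; white vertices in $L,R$ are the marked boundary points $\tilde q^i_j,q^i_j$, giving divergences $+i$ or $-i$ respectively; and the remaining non-prescribed unbounded elevators supply the extra divergences dictated by $\beta,\tilde\beta$. Conservation of the horizontal flux at each floor translates directly to the divergence condition $\operatorname{div}(B_i)=r_i-l_i$ for an appropriate choice of orderings $r,l$ of $D_r,D_l$. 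The number of grey vertices then matches via \cref{prop1}, giving a bijection between stretched tropical curves contributing to the count and the marked floor diagrams of the asserted type.

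Finally, the multiplicity bookkeeping is the main obstacle: I would show that the product of Mikhalkin vertex multiplicities of a stretched tropical curve collapses to $\prod w(e)$ over internal (black--black through a grey vertex) edges. Inside each floor, the standard analysis for $h$-transverse polygons shows that the Mikhalkin multiplicities of the trivalent vertices along the floor telescope, so each floor contributes the product of the weights of its adjacent internal elevator edges divided by weights of the elevator edges meeting either the toric boundary or a white interior constraint, the latter contributing $1$. Assembling these contributions across all floors, the boundary and endpoint factors cancel and one is left precisely with $\mu(\mathcal D)=\prod w(e)$ over internal edges. Summing over all floor diagrams of the prescribed multidegree, genus, left-right sequence and divergence multiplicity vector then yields the claimed formula.
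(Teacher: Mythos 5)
Your proposal follows essentially the same route as the paper: invoke the Mikhalkin/Shustin correspondence theorem (the latter handling tangency conditions along toric divisors) to reduce the algebraic count to a tropical one, then apply the Brugall\'e--Mikhalkin floor decomposition to translate tropical curves into marked floor diagrams. The paper's own proof is likewise a sketch, citing \cite{brugalle2008floor} and \cite{shustin2012tropical} and describing the same bijection you outline: black vertices are floors, grey vertices are point constraints on bounded elevators (balancing forces $\mathrm{div}=0$ there), and white vertices are constraints on unbounded elevators or boundary marked points.

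One point to correct: the orientation of your floor decomposition is transposed relative to the paper's conventions. Here the $h$-transverse polygon has horizontal top and bottom edges carrying the tangency data, so the elevator direction is $(0,1)$, i.e.\ vertical, and the interior point conditions should be stretched in the $y$-direction, making each floor approximately horizontal, with floors joined by long vertical elevators. Your phrase ``vertically stretched configuration, so that their $x$-coordinates are much more spread out'' is internally inconsistent, and ``horizontal elevator edges'' and ``thin vertical strip of integer width one'' should read ``vertical elevator edges'' and ``thin horizontal strip.'' With that corrected, the remaining accounting (the $a$ floors corresponding to the $a$ unit-height horizontal strips of $P(\textbf{c},\textbf{d})$, conservation of vertical flux at each floor yielding $\mathrm{div}(B_i)=r_i-l_i$, and the Mikhalkin multiplicity collapse to $\prod w(e)$ over internal edges) is the right plan and matches what the paper delegates to the references.
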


\begin{proof}
   In \cite{brugalle2008floor} a proof of this result is given for

    \begin{equation}
        \alpha=\tilde\alpha=0,\>\>\beta=(d^b,0,\dots,0),\>\>\tilde\beta=(d^t,0,\dots,0).
    \end{equation}

    employing Mikhalkin's correspondence Theorem \cite{mikhalkin2005enumerative} that expresses enumerations of curves in toric surfaces via \textit{tropical curves} which are piecewise linear graphs in the plane. In \cite[Theorem 2]{shustin2012tropical} a generalisation of Mikhalkin's correspondence Theorem which covers the case of curves satisfying tangency conditions with toric divisors was proved.  For the sake of convinience, we provide a sketch of the proof for a tropically inclined reader. Shustin's strategy was to dissipate each point $p$ with multiplicity $k>1$ in $k$ points in a neighbourhood of $p$. Considering the curves passing through the new configuration of points and then specialising back to the original one, Shustin proved that each of these curves converges to a curve satisfying the prescibed tangency. This yields a correspondence theorem between enumeration of classical and tropical curves with point and tangency conditions. In order to obtain our result, we need to construct a bijection between tropical curves and floor diagrams. Let $P=P(\textbf{c},\textbf{d})$ be an $h$-transverse polygon and $S(\textbf{c})$ be the corresponding toric surface. There is a correspondence between lattice subdivisions of $P$ and the tropical curves of multidegree $\textbf{d}$ having $P$ as a Newton polygon:
    
    \begin{itemize}
        \item the vertices of the tropical curve correspond to polygons in the lattice subdivision;
        \item the edges of the tropical curve emanated from a vertex $v$ correspond to the normal lines to the sides of the polygon corresponding to $v$;
        \item the faces determined by two edges emanated from a vertex $v$ correspond to vertices of the polygon in the lattice subdivision corresponding to $v$. 
    \end{itemize}

    Fixing a preferred direction in $\mathbb R^2$ allows to distinguish between edges of the tropical curve which are parallel to this direction and the others. The direction we fix is given by the vector $(0,1)$. We call the unbounded edges of the tropical curve leaves and the edges in direction $(0,1)$ elevators. A connected component of the tropical curve after removing bounded and unbounded elevators is called a floor.\\
    We want to associate a decorated graph $\mathcal D(T)$ to the tropical curve $T$: black vertices of $\mathcal D(T)$ correspond to floors of $T$, grey vertices of $\mathcal D(T)$ correspond to bounded elevators of $T$ and white vertices of $\mathcal D(T)$ correspond to unbounded elevators of $T$. More precisely, each vertex of the graph corresponds to a point in a fixed generic configuration. The weights of black-grey edges are the weights of the corresponding elevators, while the weights of the black-white edges are the weights of the corresponding leaves. We divide the graph in three blocks by adding vertical dashed lines, the white vertices in the central block correspond to non-prescribed points, while the white vertices in the left and right blocks correspond to points in the generic configuration belonging to $D_t$ and $D_b$ respectively. Furthermore, to each vertex we associate a number called divergence:

    \begin{itemize}
        \item to each black vertex we associate the difference between the slopes of the right and left leaves of the corresponding floor;
        \item to each grey vertex we associate $0$;
        \item to each white vertex we associate the weight of the corresponding leaf if the leaf points to $+\infty$ or minus the weight of the corresponding leaf if the leaf points to $-\infty$.
    \end{itemize}

    The construction explained above yields a bijection between tropical curves and floor diagrams.
\end{proof}

\begin{example}
    In \cref{fig-floorex1}, we give an example of a floor diagram constructed by using the technique explained in the proof of \cref{thm4} in the case $P=P(\textbf c,\textbf d)$, where $\textbf c=(2;-1,0)$, $\textbf d=(1;2;1,1)$ and $x_1=1$, $x_2=-5$ and $y_1=-1$.
\end{example}

\begin{figure}
   \tikzset{every picture/.style={line width=0.75pt}} 

\begin{tikzpicture}[x=0.75pt,y=0.75pt,yscale=-0.9,xscale=0.9]

\draw    (209.73,109.09) -- (253.81,110) ;
\draw [shift={(253.81,110)}, rotate = 1.18] [color={rgb, 255:red, 0; green, 0; blue, 0 }  ][fill={rgb, 255:red, 0; green, 0; blue, 0 }  ][line width=0.75]      (0, 0) circle [x radius= 5.36, y radius= 5.36]   ;
\draw [shift={(205.37,109)}, rotate = 1.18] [color={rgb, 255:red, 0; green, 0; blue, 0 }  ][line width=0.75]      (0, 0) circle [x radius= 5.36, y radius= 5.36]   ;
\draw    (253.81,110) -- (299.52,110) ;
\draw [shift={(303.88,110)}, rotate = 0] [color={rgb, 255:red, 0; green, 0; blue, 0 }  ][line width=0.75]      (0, 0) circle [x radius= 5.36, y radius= 5.36]   ;
\draw [shift={(253.81,110)}, rotate = 0] [color={rgb, 255:red, 0; green, 0; blue, 0 }  ][fill={rgb, 255:red, 0; green, 0; blue, 0 }  ][line width=0.75]      (0, 0) circle [x radius= 5.36, y radius= 5.36]   ;
\draw    (308.24,110) -- (353.96,110) ;
\draw [shift={(353.96,110)}, rotate = 0] [color={rgb, 255:red, 0; green, 0; blue, 0 }  ][fill={rgb, 255:red, 0; green, 0; blue, 0 }  ][line width=0.75]      (0, 0) circle [x radius= 5.36, y radius= 5.36]   ;
\draw [shift={(303.88,110)}, rotate = 0] [color={rgb, 255:red, 0; green, 0; blue, 0 }  ][line width=0.75]      (0, 0) circle [x radius= 5.36, y radius= 5.36]   ;
\draw    (353.96,110) -- (398.03,110) ;
\draw [shift={(402.39,110)}, rotate = 0] [color={rgb, 255:red, 0; green, 0; blue, 0 }  ][line width=0.75]      (0, 0) circle [x radius= 5.36, y radius= 5.36]   ;
\draw [shift={(353.96,110)}, rotate = 0] [color={rgb, 255:red, 0; green, 0; blue, 0 }  ][fill={rgb, 255:red, 0; green, 0; blue, 0 }  ][line width=0.75]      (0, 0) circle [x radius= 5.36, y radius= 5.36]   ;
\draw    (353.96,110) .. controls (385.64,81.05) and (423.44,80.04) .. (447.44,106.96) ;
\draw [shift={(450,110)}, rotate = 51.57] [color={rgb, 255:red, 0; green, 0; blue, 0 }  ][line width=0.75]      (0, 0) circle [x radius= 5.36, y radius= 5.36]   ;
\draw [shift={(353.96,110)}, rotate = 317.58] [color={rgb, 255:red, 0; green, 0; blue, 0 }  ][fill={rgb, 255:red, 0; green, 0; blue, 0 }  ][line width=0.75]      (0, 0) circle [x radius= 5.36, y radius= 5.36]   ;
\draw  [dash pattern={on 4.5pt off 4.5pt}]  (222.61,77) -- (221.79,146) ;
\draw  [dash pattern={on 4.5pt off 4.5pt}]  (414.7,74) -- (414.7,147) ;
\draw  [fill={rgb, 255:red, 155; green, 155; blue, 155 }  ,fill opacity=1 ] (299.37,110) .. controls (299.37,106.96) and (301.39,104.5) .. (303.88,104.5) .. controls (306.37,104.5) and (308.4,106.96) .. (308.4,110) .. controls (308.4,113.04) and (306.37,115.5) .. (303.88,115.5) .. controls (301.39,115.5) and (299.37,113.04) .. (299.37,110) -- cycle ;
\draw [line width=2.25]    (34.63,90) -- (59.25,90) ;
\draw [line width=2.25]    (34.63,90) -- (10,120) ;
\draw [line width=2.25]    (10,120) -- (10,149) ;
\draw [line width=2.25]    (10,149) -- (158.58,149) ;
\draw [line width=2.25]    (59.25,90) -- (158.58,149) ;
\draw    (10,120) -- (108.92,119.5) ;
\draw    (10,120) -- (59.25,90) ;
\draw    (10,149) -- (108.92,119.5) ;
\draw    (108.92,119.5) -- (133.96,149) ;
\draw [line width=3]    (483,166) -- (553.33,166.76) ;
\draw [line width=3]    (520,67) -- (553.33,126.17) ;
\draw [line width=3]    (476.63,27.01) -- (520,67) ;
\draw    (520,18.22) -- (520,67) ;
\draw [line width=3]    (553.33,126.17) -- (597.43,44.65) ;
\draw [line width=3]    (553.33,166.76) -- (564.27,208.37) ;
\draw [line width=3]    (564.27,208.37) -- (586.5,188.25) ;
\draw [line width=3]    (586.5,188.25) -- (608,147.32) ;
\draw    (564.27,208.37) -- (565,278.98) ;
\draw    (586.5,188.25) -- (586.86,280) ;
\draw    (553.33,126.17) -- (553.33,166.76) ;

\draw (228.93,85.4) node [anchor=north west][inner sep=0.75pt]    {$1$};
\draw (390.64,65.4) node [anchor=north west][inner sep=0.75pt]    {$5$};
\draw (373.4,116.4) node [anchor=north west][inner sep=0.75pt]    {$1$};
\draw (551,238.4) node [anchor=north west][inner sep=0.75pt]    {$5$};
\draw (274,88.4) node [anchor=north west][inner sep=0.75pt]    {$4$};
\draw (324,89.4) node [anchor=north west][inner sep=0.75pt]    {$4$};
\draw (557,135.4) node [anchor=north west][inner sep=0.75pt]    {$4$};

\end{tikzpicture}

\caption{}
\label{fig-floorex1}
\end{figure}

\begin{example}\label{example3}
    Let us consider $n=2$, $m=3$, $n_1=2$, $n_2=1$, $g=0$, $\textbf c^r=(c_1^r,c_2^r)$, $\textbf c^l=(c_1^l,c_2^l,c_3^l)$, $\textbf d^r=(2,2)$ and $\textbf d^l=(1,1,2)$. Then, $a=d_1^r+d_2^r=d_1^l+d_2^l+d_3^l=4$ and the multisets are 
    
    \begin{equation}
    D_r=\{c_1^r,c_1^r,c_2^r,c_2^r\}\qquad D_l=\{c_1^l,c_2^l,c_3^l,c_3^l\}.
    \end{equation}
    Let $r=(c_1^r,c_2^r,c_1^r,c_2^r)$ and $l=(c_3^l,c_1^l,c_3^l,c_2^l)$, so $r-l=(c_1^r-c_3^l,c_2^r-c_1^l,c_1^r-c_3^l,c_2^r-c_2^l)$.  The floor diagram in \cref{fig-floorex2} is of type $(2,1)$, with divergence sequence $(3,-4,-2)$, divergence multiplicity vector $(\alpha,\beta,\tilde\alpha,\tilde\beta)=(0001,01,001,0)$, multidegree $\textbf d=(3;2,2;1,1,2)$ and multiplicity 
    
    \begin{equation}
    \mu(\mathcal D)=2(c_{1}^{r} -c_{3}^{l} +3)^2(c_{2}^{r} -c_{1}^{l} +c_{1}^{r} -c_{3}^{l} +3)^2(6-c_{2}^{r}+c_{2}^{l})^2.
    \end{equation}
    This floor diagram contributes non-zero to the Gromov-Witten invariant 
    
    \begin{equation}
    N_{\textbf c,0}^{0001,01,001,0}(3;2,2;1,1,2)=F^{2,1}_{(2,2;1,1,2),\textbf c,0}(3,-4,-2)
    \end{equation}
    of the surface $S(\textbf c)$ in the case all weights of the edges are positive, namely
    \begin{equation}
        \begin{cases}
            c_1^r-c_3^l+3>0\\
            c_2^r-c_1^l+c_1^r-c_3^l+3>0\\
            6-c_2^r+c_2^l>0
        \end{cases}
    \end{equation}
 \end{example}
   
\begin{center} 
\begin{figure}[H]
\tikzset{every picture/.style={line width=0.75pt}} 

\begin{tikzpicture}[x=0.75pt,y=0.75pt,yscale=-1,xscale=1]

\draw  [dash pattern={on 4.5pt off 4.5pt}]  (84.67,65.33) -- (84.67,80.33) -- (84.67,166.33) ;
\draw  [dash pattern={on 4.5pt off 4.5pt}]  (577.33,65.67) -- (577.33,167.67) ;
\draw   (48.67,108) .. controls (48.67,104.13) and (51.8,101) .. (55.67,101) .. controls (59.53,101) and (62.67,104.13) .. (62.67,108) .. controls (62.67,111.87) and (59.53,115) .. (55.67,115) .. controls (51.8,115) and (48.67,111.87) .. (48.67,108) -- cycle ;
\draw  [fill={rgb, 255:red, 0; green, 0; blue, 0 }  ,fill opacity=1 ] (106.33,108.33) .. controls (106.33,104.47) and (109.47,101.33) .. (113.33,101.33) .. controls (117.2,101.33) and (120.33,104.47) .. (120.33,108.33) .. controls (120.33,112.2) and (117.2,115.33) .. (113.33,115.33) .. controls (109.47,115.33) and (106.33,112.2) .. (106.33,108.33) -- cycle ;
\draw  [fill={rgb, 255:red, 128; green, 128; blue, 128 }  ,fill opacity=1 ] (166.67,109.33) .. controls (166.67,105.47) and (169.8,102.33) .. (173.67,102.33) .. controls (177.53,102.33) and (180.67,105.47) .. (180.67,109.33) .. controls (180.67,113.2) and (177.53,116.33) .. (173.67,116.33) .. controls (169.8,116.33) and (166.67,113.2) .. (166.67,109.33) -- cycle ;
\draw  [fill={rgb, 255:red, 0; green, 0; blue, 0 }  ,fill opacity=1 ] (226,110) .. controls (226,106.13) and (229.13,103) .. (233,103) .. controls (236.87,103) and (240,106.13) .. (240,110) .. controls (240,113.87) and (236.87,117) .. (233,117) .. controls (229.13,117) and (226,113.87) .. (226,110) -- cycle ;
\draw  [fill={rgb, 255:red, 128; green, 128; blue, 128 }  ,fill opacity=1 ] (285.33,110) .. controls (285.33,106.13) and (288.47,103) .. (292.33,103) .. controls (296.2,103) and (299.33,106.13) .. (299.33,110) .. controls (299.33,113.87) and (296.2,117) .. (292.33,117) .. controls (288.47,117) and (285.33,113.87) .. (285.33,110) -- cycle ;
\draw  [fill={rgb, 255:red, 0; green, 0; blue, 0 }  ,fill opacity=1 ] (345.33,110) .. controls (345.33,106.13) and (348.47,103) .. (352.33,103) .. controls (356.2,103) and (359.33,106.13) .. (359.33,110) .. controls (359.33,113.87) and (356.2,117) .. (352.33,117) .. controls (348.47,117) and (345.33,113.87) .. (345.33,110) -- cycle ;
\draw  [fill={rgb, 255:red, 128; green, 128; blue, 128 }  ,fill opacity=1 ] (406.67,110) .. controls (406.67,106.13) and (409.8,103) .. (413.67,103) .. controls (417.53,103) and (420.67,106.13) .. (420.67,110) .. controls (420.67,113.87) and (417.53,117) .. (413.67,117) .. controls (409.8,117) and (406.67,113.87) .. (406.67,110) -- cycle ;
\draw  [fill={rgb, 255:red, 0; green, 0; blue, 0 }  ,fill opacity=1 ] (466,108.67) .. controls (466,104.8) and (469.13,101.67) .. (473,101.67) .. controls (476.87,101.67) and (480,104.8) .. (480,108.67) .. controls (480,112.53) and (476.87,115.67) .. (473,115.67) .. controls (469.13,115.67) and (466,112.53) .. (466,108.67) -- cycle ;
\draw   (526,108) .. controls (526,104.13) and (529.13,101) .. (533,101) .. controls (536.87,101) and (540,104.13) .. (540,108) .. controls (540,111.87) and (536.87,115) .. (533,115) .. controls (529.13,115) and (526,111.87) .. (526,108) -- cycle ;
\draw   (600.67,109.33) .. controls (600.67,105.47) and (603.8,102.33) .. (607.67,102.33) .. controls (611.53,102.33) and (614.67,105.47) .. (614.67,109.33) .. controls (614.67,113.2) and (611.53,116.33) .. (607.67,116.33) .. controls (603.8,116.33) and (600.67,113.2) .. (600.67,109.33) -- cycle ;
\draw    (62.67,108) -- (106.33,108.33) ;
\draw    (120.33,108.33) -- (166.67,109.33) ;
\draw    (180.67,109.33) -- (226,110) ;
\draw    (240,110) -- (285.33,110) ;
\draw    (299.33,110) -- (345.33,110) ;
\draw    (359.33,110) -- (406.67,110) ;
\draw    (420.67,110) -- (466,108.67) ;
\draw    (480,108.67) -- (526,108) ;
\draw    (480,108.67) .. controls (520,78.67) and (561.67,77.67) .. (600.67,109.33) ;

\draw (87.33,93.4) node [anchor=north west][inner sep=0.75pt]  [font=\scriptsize]  {$x_{1}$};
\draw (120,117.73) node [anchor=north west][inner sep=0.75pt]  [font=\tiny]  {$c_{1}^{r} -c_{3}^{l} +x_{1}$};
\draw (180,92.4) node [anchor=north west][inner sep=0.75pt]  [font=\tiny]  {$c_{1}^{r} -c_{3}^{l} +x_{1}$};
\draw (283.67,89.07) node [anchor=north west][inner sep=0.75pt]  [font=\tiny]  {$c_{2}^{r} -c_{1}^{l} +c_{1}^{r} -c_{3}^{l} +x_{1}$};
\draw (406.67,87.4) node [anchor=north west][inner sep=0.75pt]  [font=\tiny]  {$-x_{2} -y_{1} -c_{2}^{r} +c_{2}^{l}$};
\draw (501.67,117.73) node [anchor=north west][inner sep=0.75pt]  [font=\tiny]  {$-y_{1}$};
\draw (529.33,73.73) node [anchor=north west][inner sep=0.75pt]  [font=\tiny]  {$-x_{2}$};
\draw (358.67,125.73) node [anchor=north west][inner sep=0.75pt]  [font=\tiny]  {$-x_{2} -y_{1} -c_{2}^{r} +c_{2}^{l}$};
\draw (224.33,122.4) node [anchor=north west][inner sep=0.75pt]  [font=\tiny]  {$c_{2}^{r} -c_{1}^{l} +c_{1}^{r} -c_{3}^{l} +x_{1}$};

\end{tikzpicture}

\caption{}
\label{fig-floorex2}
\end{figure}
\end{center}

\begin{example}\label{example4}
    Fix $n=2$, $m=3$, $n_1=3$, $n_2=1$, $g=1$, $\textbf c=(\textbf c^r;\textbf c^l)=(c_1^r,c_2^r;c_1^l,c_2^l,c_3^l)$ and $(\textbf d^r,\textbf d^l)=(2,1;1,1,1)$. Furthermore, let $r=(c_1^r,c_2^r,c_1^r)$ and $l=(c_3^l,c_2^l,c_1^l)$, then $r-l=(c_1^r-c_3^l,c_2^r-c_2^l,c_1^r-c_1^l)$. Consider the floor diagram in \cref{fig-floorex3}.

    Note that the weights are uniquely determined by the variable $w$. Since every edge must have positive weight, we get the following inequalities:
    \begin{equation}
        x_1,x_2>0,\qquad y_1,x_3<0,
    \end{equation}
    \begin{equation}
        w>0,\qquad c_2^l-c_2^r+w>0,\qquad -x_3-w-c_1^r+c_1^l>0.
    \end{equation}
    If we assume $c_1^l-c_1^r-x_3>c_2^r-c_2^l$, {then the possible values of $w$ so that we have valid weights are $\max\{0,c_2^r-c_2^l\}\leq w\leq c_1^l-c_1^r-x_3$, and the sum of the multiplicities of the corresponding diagrams is:}
    \begin{equation}
        \sum_{w=\max\{0,c_2^r-c_2^l\}}^{c_1^l-c_1^r-x_3}(-y_1)w^2(c_2^l-c_2^r+w)^2(-x_3-w-c_1^r+c_1^l)^2.
    \end{equation}

\begin{center}
\begin{figure}
    \tikzset{every picture/.style={line width=0.75pt}} 

\begin{tikzpicture}[x=0.75pt,y=0.75pt,yscale=-1,xscale=1]

\draw [line width=0.75]    (25.41,90.08) .. controls (65.07,90.62) and (80.05,85.91) .. (109,111) ;
\draw [shift={(109,111)}, rotate = 40.91] [color={rgb, 255:red, 0; green, 0; blue, 0 }  ][fill={rgb, 255:red, 0; green, 0; blue, 0 }  ][line width=0.75]      (0, 0) circle [x radius= 5.36, y radius= 5.36]   ;
\draw [shift={(21,90)}, rotate = 1.33] [color={rgb, 255:red, 0; green, 0; blue, 0 }  ][line width=0.75]      (0, 0) circle [x radius= 5.36, y radius= 5.36]   ;
\draw [line width=0.75]    (26.52,132) .. controls (73.57,131.92) and (67.29,130.4) .. (109,111) ;
\draw [shift={(109,111)}, rotate = 335.06] [color={rgb, 255:red, 0; green, 0; blue, 0 }  ][fill={rgb, 255:red, 0; green, 0; blue, 0 }  ][line width=0.75]      (0, 0) circle [x radius= 5.36, y radius= 5.36]   ;
\draw [shift={(22,132)}, rotate = 0] [color={rgb, 255:red, 0; green, 0; blue, 0 }  ][line width=0.75]      (0, 0) circle [x radius= 5.36, y radius= 5.36]   ;
\draw    (109,111) -- (156.64,111) ;
\draw [shift={(161,111)}, rotate = 0] [color={rgb, 255:red, 0; green, 0; blue, 0 }  ][line width=0.75]      (0, 0) circle [x radius= 5.36, y radius= 5.36]   ;
\draw [shift={(109,111)}, rotate = 0] [color={rgb, 255:red, 0; green, 0; blue, 0 }  ][fill={rgb, 255:red, 0; green, 0; blue, 0 }  ][line width=0.75]      (0, 0) circle [x radius= 5.36, y radius= 5.36]   ;
\draw    (109,111) .. controls (147.6,82.05) and (179.68,85.69) .. (206.14,108.45) ;
\draw [shift={(209,111)}, rotate = 42.8] [color={rgb, 255:red, 0; green, 0; blue, 0 }  ][line width=0.75]      (0, 0) circle [x radius= 5.36, y radius= 5.36]   ;
\draw [shift={(109,111)}, rotate = 323.13] [color={rgb, 255:red, 0; green, 0; blue, 0 }  ][fill={rgb, 255:red, 0; green, 0; blue, 0 }  ][line width=0.75]      (0, 0) circle [x radius= 5.36, y radius= 5.36]   ;
\draw    (109,111) .. controls (196.75,181.2) and (229.36,143.03) .. (268.01,113.27) ;
\draw [shift={(271,111)}, rotate = 323.13] [color={rgb, 255:red, 0; green, 0; blue, 0 }  ][line width=0.75]      (0, 0) circle [x radius= 5.36, y radius= 5.36]   ;
\draw [shift={(109,111)}, rotate = 38.66] [color={rgb, 255:red, 0; green, 0; blue, 0 }  ][fill={rgb, 255:red, 0; green, 0; blue, 0 }  ][line width=0.75]      (0, 0) circle [x radius= 5.36, y radius= 5.36]   ;
\draw    (212.66,108.4) .. controls (252.62,81.49) and (310.6,89.66) .. (330,111) ;
\draw [shift={(330,111)}, rotate = 47.73] [color={rgb, 255:red, 0; green, 0; blue, 0 }  ][fill={rgb, 255:red, 0; green, 0; blue, 0 }  ][line width=0.75]      (0, 0) circle [x radius= 5.36, y radius= 5.36]   ;
\draw [shift={(209,111)}, rotate = 323.13] [color={rgb, 255:red, 0; green, 0; blue, 0 }  ][line width=0.75]      (0, 0) circle [x radius= 5.36, y radius= 5.36]   ;
\draw    (274.22,114.52) .. controls (310.73,151.6) and (402.8,160.47) .. (461,111) ;
\draw [shift={(461,111)}, rotate = 319.64] [color={rgb, 255:red, 0; green, 0; blue, 0 }  ][fill={rgb, 255:red, 0; green, 0; blue, 0 }  ][line width=0.75]      (0, 0) circle [x radius= 5.36, y radius= 5.36]   ;
\draw [shift={(271,111)}, rotate = 49.64] [color={rgb, 255:red, 0; green, 0; blue, 0 }  ][line width=0.75]      (0, 0) circle [x radius= 5.36, y radius= 5.36]   ;
\draw    (330,111) -- (384.64,111) ;
\draw [shift={(389,111)}, rotate = 0] [color={rgb, 255:red, 0; green, 0; blue, 0 }  ][line width=0.75]      (0, 0) circle [x radius= 5.36, y radius= 5.36]   ;
\draw [shift={(330,111)}, rotate = 0] [color={rgb, 255:red, 0; green, 0; blue, 0 }  ][fill={rgb, 255:red, 0; green, 0; blue, 0 }  ][line width=0.75]      (0, 0) circle [x radius= 5.36, y radius= 5.36]   ;
\draw    (393.36,111) -- (461,111) ;
\draw [shift={(461,111)}, rotate = 0] [color={rgb, 255:red, 0; green, 0; blue, 0 }  ][fill={rgb, 255:red, 0; green, 0; blue, 0 }  ][line width=0.75]      (0, 0) circle [x radius= 5.36, y radius= 5.36]   ;
\draw [shift={(389,111)}, rotate = 0] [color={rgb, 255:red, 0; green, 0; blue, 0 }  ][line width=0.75]      (0, 0) circle [x radius= 5.36, y radius= 5.36]   ;
\draw    (461,111) -- (535.64,111) ;
\draw [shift={(540,111)}, rotate = 0] [color={rgb, 255:red, 0; green, 0; blue, 0 }  ][line width=0.75]      (0, 0) circle [x radius= 5.36, y radius= 5.36]   ;
\draw [shift={(461,111)}, rotate = 0] [color={rgb, 255:red, 0; green, 0; blue, 0 }  ][fill={rgb, 255:red, 0; green, 0; blue, 0 }  ][line width=0.75]      (0, 0) circle [x radius= 5.36, y radius= 5.36]   ;
\draw  [dash pattern={on 4.5pt off 4.5pt}]  (48,62) -- (49,173) ;
\draw  [dash pattern={on 4.5pt off 4.5pt}]  (498,70) -- (499,166) ;
\draw  [fill={rgb, 255:red, 155; green, 155; blue, 155 }  ,fill opacity=1 ] (203,110.5) .. controls (203,106.91) and (205.91,104) .. (209.5,104) .. controls (213.09,104) and (216,106.91) .. (216,110.5) .. controls (216,114.09) and (213.09,117) .. (209.5,117) .. controls (205.91,117) and (203,114.09) .. (203,110.5) -- cycle ;
\draw  [fill={rgb, 255:red, 155; green, 155; blue, 155 }  ,fill opacity=1 ] (264.5,111) .. controls (264.5,107.41) and (267.41,104.5) .. (271,104.5) .. controls (274.59,104.5) and (277.5,107.41) .. (277.5,111) .. controls (277.5,114.59) and (274.59,117.5) .. (271,117.5) .. controls (267.41,117.5) and (264.5,114.59) .. (264.5,111) -- cycle ;
\draw  [fill={rgb, 255:red, 155; green, 155; blue, 155 }  ,fill opacity=1 ] (383,110.5) .. controls (383,106.91) and (385.91,104) .. (389.5,104) .. controls (393.09,104) and (396,106.91) .. (396,110.5) .. controls (396,114.09) and (393.09,117) .. (389.5,117) .. controls (385.91,117) and (383,114.09) .. (383,110.5) -- cycle ;

\draw (60,67.4) node [anchor=north west][inner sep=0.75pt]    {$x_{1}$};
\draw (61,131.4) node [anchor=north west][inner sep=0.75pt]    {$x_{2}$};
\draw (137,114.4) node [anchor=north west][inner sep=0.75pt]    {$-y_{1}$};
\draw (473,86.4) node [anchor=north west][inner sep=0.75pt]    {$-x_{3}$};
\draw (333,154.4) node [anchor=north west][inner sep=0.75pt]  [font=\normalsize]  {$-x_3-w-c_{1}^{r} +c_{1}^{l}$};
\draw (151,158.4) node [anchor=north west][inner sep=0.75pt]    {$-x_3-w-c_{1}^{r} +c_{1}^{l}$};
\draw (420,86.4) node [anchor=north west][inner sep=0.75pt]    {$w$};
\draw (350,86.4) node [anchor=north west][inner sep=0.75pt]    {$w$};
\draw (235,63.4) node [anchor=north west][inner sep=0.75pt]  [font=\small]  {$c_{2}^{l} -c_{2}^{r} +w$};
\draw (120,69.4) node [anchor=north west][inner sep=0.75pt]  [font=\small]  {$c_{2}^{l} -c_{2}^{r} +w$};

\end{tikzpicture}
\caption{}
\label{fig-floorex3}
\end{figure}
\end{center} 
\end{example}

\begin{example}\label{example-compute_invariant}
    Let us consider $\textbf c^r=2$, $\textbf c^l=(-1,0)$, $g=0$, $\alpha=00001$, $\beta=1$, $\tilde\alpha=1$ and $\tilde\beta=0$. In \cref{fig-floorex4}, we see all the floor diagrams that contribute to $N_{(2;-1,0),0}^{00001,1,1,0}(1;2;1,1)$, so by \cref{thm4} we get
    \begin{equation}
        N_{(2;-1,0),0}^{00001,1,1,0}(1;2;1,1)=3\mu(\mathcal{D}_1)+\mu(\mathcal{D}_4)+3\mu(\mathcal{D}_5)+\mu(\mathcal{D}_8)=27+16+12+9=64.
    \end{equation}
    {Note that the floor diagrams $\mathcal D_2$ and $\mathcal D_3$ are obtained from $\mathcal D_1$ by moving the white vertex to the right. This operation create new floor diagrams having the same multiplicity and it explains the factor $3$ in front of $\mu(\mathcal D_1)$. Same reasoning holds for $\mathcal D_5,\mathcal D_6$ and $\mathcal D_7$.}
\end{example}

\section{Proof of \cref{thm1}}\label{pp}

In this section, {we provide a proof of \cref{thm1} and illustrate our results in an example in \cref{sec:example}. We will refer to some results on weighted Ehrhart theory that are listed (and in some cases proved) in \cite{ardila2017double}.}




\begin{figure}[H]
    \tikzset{every picture/.style={line width=0.75pt}} 
\begin{center}    


\end{center}

\caption{}
\label{fig-floorex4}
\end{figure}

\begin{proof}[\textbf{Proof of \cref{thm1}}]\label{thm-proof1}    By \cref{thm4} we have
    \begin{equation}
        F_{(\textbf{d}^r,\textbf{d}^l),\textbf c,g}^{n_1,n_2}(\textbf x,\textbf y)=\sum_{\mathcal D}\mu(\mathcal D)
    \end{equation}
    where the sum runs over all floor diagrams $\mathcal D$ for $S(\textbf{c})$ having multidegree \textbf{d}, genus $g$, divergence multiplicity vector $(\alpha,\beta,\tilde\alpha,\tilde\beta)$ and left-right sequence \textbf{x}. {We proceed by three steps: the first two are essentially the same as in \cite[Theorem 1.3]{ardila2017double}, however in the last step we have to be a bit more careful in saying which are the correct chambers in which the function is polynomial.}\\ 
    {\textbf{Step $1$: rewrite the function $F_{(\textbf{d}^r,\textbf{d}^l),\textbf c,g}^{n_1,n_2}(\textbf x,\textbf y)$ in a more convenient way.}} Let $\tilde{\mathcal D}$ be the graph obtained by removing all weights of $\mathcal D$, but such that the underlying graph $\tilde{\mathcal D}$ inherit the partition $V=L\cup C\cup R$ of the vertices, the ordering of $C$ and the coloring of the vertices. The collection $\mathcal G$ of such graphs that contribute to $F_{(\textbf{d}^r,\textbf{d}^l),\textbf c,g}^{n_1,n_2}(\textbf x,\textbf y)$ is finite and depends only on $g$, $a=\displaystyle\sum_{i=1}^nd_i^r=\sum_{j=1}^md_j^l$ and $n_1+n_2$. Let us denote by $Perm(D_r)$ and $Perm(D_l)$ the sets of permutations of the multisets $D_r$ and $D_l$ respectively and let $r\in Perm(D_r)$ and $l\in Perm(D_l)$. For each graph $G\in\mathcal G$, let $E(G)$ and $V(G)$ be the sets of edges and vertices of $G$ respectively and define the set $W_{G,\textbf{c},r-l}(\textbf{x},\textbf{y})$ of weights $w:E(G)\to\mathbb N$ for which the resulting weighted graph is a floor diagram for $S(\textbf{c})$, i.e. such that the $i$-th black vertex has divergence $r_i-l_i$ and every grey vertex has divergence $0$, with white divergence sequence $(\textbf{x},\textbf{y})$. Note that, by construction, the obtained floor diagram has genus $g$ and multidegree \textbf{d}. Call $\mathbb R^{X}=\{\textbf w:X\to\mathbb R\}$ and let $\pi_{\text{int}}:\mathbb R^{E(G)}\to\mathbb R$ be the polynomial function defined by $\pi_{\text{int}}(\textbf{w})=\displaystyle\prod_{\substack{e\in E(G)\\e \text{ internal}}}\textbf w(e)$, which is the multiplicity of the floor diagram $\mathcal D$ obtained from $G$ adding the weights $\textbf{w}(e)$ at every internal edge $e$. Define
    \begin{equation}
        F_{G,\textbf{c},r-l}(\textbf{x},\textbf{y})=\sum_{\textbf w\in W_{G,\textbf{c},r-l}}\pi_{\text{int}}(\textbf w).
    \end{equation}
    Note that $F_{G,\textbf{c},r-l}(\textbf{x},\textbf{y})$ depends on the order of the entries of \textbf{y}, while in $F_{(\textbf{d}^r,\textbf{d}^l),\textbf c,g}^{n_1,n_2}(\textbf x,\textbf y)$ we have to consider all the distinct orders for $\textbf{y}$:
    \begin{equation}
        F_{(\textbf{d}^r,\textbf{d}^l),\textbf c,g}^{n_1,n_2}(\textbf x,\textbf y)=\frac{1}{\beta_1!\beta_2!\cdots\tilde\beta_1!\tilde\beta_2!\cdots}\sum_{G\in\mathcal G}\sum_{(r,l)}\sum_{\sigma\in S_{n_2}}F_{G,\textbf{c},r-l}(\textbf{x},\sigma(\textbf{y}))
    \end{equation}
    {\textbf{Step $2$: express $F_{(\textbf{d}^r,\textbf{d}^l),\textbf c,g}^{n_1,n_2}(\textbf x,\textbf y)$ as a weighted partition function.}} Recall that the divergence of a vertex is defined as $$\text{div}(v)=\sum_{e:v\to v'}w(e)-\sum_{e:v'\to v}w(e)$$ and that the adjacency matrix of the graph $G$ is given by $A\in\mathbb R^{V(G)\times E(G)}$ which is, in our convention:
    \begin{equation}
        A(v,e)=
        \begin{cases}
            1 &\text{when}\>e:v\to v'\>\text{for some}\>v'\\
            -1 &\text{when}\>e:v'\to v\>\text{for some}\>v'\\
            0 &\text{otherwise}
        \end{cases}
    \end{equation}
    Note that the columns of the matrix $A$ are a subset of the root system $A_{|E(G)|-1}$ (see \cite[Example 4.4]{ardila2017double}), therefore the matrix $A$ is unimodular. Now, take $\textbf{k}\in\mathbb R^{V(G)}$ and define the flow polytope
    \begin{align}
        \Phi_G(\textbf{k})&=\{\textbf w\in\mathbb R^{E(G)}|\>\textbf w(e)\geq0\>\text{for all}\> e\in E(G),\>\text{div}(v)=\textbf k(v)\>\text{for all vertices}\>v\}\\
        &=\{\textbf w\in\mathbb R^{E(G)}|\>A\textbf w=\textbf{k},\>\textbf w\geq0\}.
    \end{align}
    If we take $\textbf{k}$ to be the vector which entries are given by $(\textbf{x},\textbf{y})$ for the white vertices, $r-l$ for the black vertices and $0$ for the grey vertices, then $W_{G,\textbf{c},r-l}(\textbf{x},\textbf{y})=\Phi_G(\textbf{k}){\cap\mathbb Z^{E(G)}}$.\\
    {Define the weighted partition function} $$\mathcal P_{G,\pi_{\text{int}}}(\textbf{k})=\sum_{\textbf w\in\Phi_G(\textbf{k}){\cap\mathbb Z^{E(G)}}}\pi_{\text{int}}(\textbf w)$$ {and consider the hyperplane $\{\textbf{k}\in\mathbb R^{V(G)}|\sum\textbf k(v)=0\}$. Let $H_{r-l}\subset\{\textbf{k}\in\mathbb R^{V(G)}|\sum\textbf k(v)=0\}$ be the subspace determined by the equations}
    \begin{equation}
        \textbf k(w_i)=x_i,\quad \textbf k(w_j)=y_j,\quad \textbf k(u)=0\>\text{for all grey}\>u,\quad \textbf k(b_i)=r_i-l_i\>\text{for all black}\> b_i.
    \end{equation}
    {We have that the restriction of $\mathcal P_{G,\pi_{\text{int}}}(\textbf{k})$ to the subspace $H_{r-l}$ is the function $F_{G,\textbf{c},r-l}(\textbf{x},\textbf{y})$.}\\
    {\textbf{Step $3$: the function $F_{(\textbf{d}^r,\textbf{d}^l),\textbf c,g}^{n_1,n_2}(\textbf x,\textbf y)$ is piecewise polynomial.} By \cite[Theorem 4.2, Example 4.4]{ardila2017double} the weighted partition function $\mathcal P_{G,\pi_{\text{int}}}(\textbf{k})$ is piecewise polynomial relative to the chambers of the discriminant arrangement in $\{\textbf{k}\in\mathbb R^{V(G)}|\sum\textbf k(v)=0\}$. Recall that this arrangement consists of the hyperplanes $\displaystyle\sum_{v'\in V'}\textbf k(v')=0$ for all subsets $V'\subseteq V$. In particular, $F_{G,\textbf{c},r-l}(\textbf{x},\textbf{y})$ is piecewise polynomial relative to the chambers of the discriminant arrangement in $H_{r-l}$. We denote this discriminant arrangement by $S_{r-l}$.} When we symmetrise, the result $\displaystyle\sum_{\sigma\in S_{n_2}}F_{G,\textbf{c},r-l}(\textbf{x},\sigma(\textbf{y}))$ is still piecewise polynomial relative to the same chambers, since the chamber structure is fixed under permutation of the $n_2$ \textbf{y} variables. What remains to prove is that $\displaystyle\sum_{(r,l)}\sum_{\sigma\in S_{n_2}}F_{G,\textbf{c},r-l}(\textbf{x},\sigma(\textbf{y}))$ is piecewise polynomial. {In order to do so, we note that, in general, $S_{r-l}$ is not the same hyperplanes arrangement as $S_{\tilde r-\tilde l}$ for $r\neq\tilde r$ and $l\neq\tilde l$ (see \cref{example-distinct_arrangements}). Therefore, when we sum over all the pairs $(r,l)$, the resulting function will be piecewise polynomial relative to the chambers of the common refinement of the hyperplanes arrangements $S_{r-l}$, in other words $\mathcal{H}^{n_1,n_2}(\textbf{c})=\displaystyle\bigcup_{(r,l)}S_{r-l}$.} This completes the proof. Finally, {the second part of the statement} follows from the same arguments.
\end{proof}
\begin{example}\label{example-distinct_arrangements}
    {We provide an example in which there are permutations $r_1,r_2\in Perm(D_r)$ and $l_1,l_2\in Perm(D_l)$ such that $S_{r_1-l_1}$ and $S_{r_2-l_2}$ do not have the same hyperplanes: let us consider $n_1=n_2=1$, $\textbf{d}^r=(1,2)$, $\textbf{d}^l=(1,1,1)$, $\textbf{c}^r=(c_1^r,c_2^r)$ and $\textbf{c}^l=(c_1^l,c_2^l,c_3^l)$ such that $c_1^r>c_2^r$ and $c_1^l<c_2^l<c_3^l$. The multisets $D_r$ and $D_l$ have the following form:}

    \begin{equation}
        {D_r=\{c_1^r,c_2^r,c_2^r\}\qquad\qquad D_l=\{c_1^l,c_2^l,c_3^l\}.}
    \end{equation}
    {Take $r_1=(c_1^r,c_2^r,c_2^r)$, $r_2=(c_2^r,c_2^r,c_1^r)$, $l_1=(c_1^l,c_2^l,c_3^l)$ and $l_2=(c_1^l,c_3^l,c_2^l)$, so that}

    \begin{equation}
        {r_1-l_1=(c_1^r-c_1^l,c_2^r-c_2^l,c_2^r-c_3^l)\qquad\qquad r_2-l_2=(c_2^r-c_1^l,c_2^r-c_3^l,c_1^r-c_2^l).}
    \end{equation}
    {It is easy to check that the hyperplane $x_1+y_1+2c_2^r-c_2^l-c_3^l=0$ belongs to the hyperplane arrangement $S_{r_1-l_1}$, but not in $S_{r_2-l_2}$.}
\end{example}

\begin{remark}\label{rmk4}
    In \cref{example4} we computed the polytope $\Phi_G(\textbf{k})=[\max\{0,c_2^r-c_2^l\},c_1^l-c_1^r-x_3]$, where $\textbf{k}=(x_1,x_2,c_1^r-c_3^l,y_1,0,0,c_2^r-c_2^l,0,c_1^r-c_1^l,x_3)$. 
\end{remark}

\begin{remark}\label{rmk5}
    In the case in which $m=1$ something interesting happens to the hyperplane arrangements $S_{r-l}$. Indeed, in this case the only left permutation possible is the identity; therefore $S_{r-id}=S_{\tilde r-id}$ with $r,\tilde r\in Perm(D_r)$. This means that the chamber structure for the piecewise polynomiality of the function $F_{(\textbf d^r,\textbf d^l),\textbf c,g}^{n_1,n_2}(\textbf x,\textbf y)$ does not depend on the permutation $r$.
\end{remark}

\begin{remark}\label{rmk-wall crossing}
    In \cite{cavalieri2010tropical} the authors presented a technique to get a wall crossing formula for genus $0$ double Hurwitz numbers and further it was generalised to arbitrary genus in \cite{cavalieri2011wall}. We briefly sketch the technique in the case of genus $0$ double Hurwitz numbers: let $C_1$ and $C_2$ be two adjacent chambers of polynomiality for genus $0$ double Hurwitz numbers and call $\delta=0$ the equation of the wall dividing them. Let us consider a graph contributing to $C_1$ that presents an edge with weight $\delta$. Once we pass through the wall what happens is that the orientation of the edge having weight $\delta$ will be inverted, which corresponds to cutting and regluing the edge in a suitable way to get the new graph and changing the sign of the edge weight. This operation divides the graph into two new graphs contributing to two genus $0$ double Hurwitz numbers having new data and this provides the recursive formula. When adapting this to our case, each edge of weight $\delta$ arises twice, thus we obtain a contribution of $\delta^2$. The squaring  erases the sign change. Therefore, we expect that new techniques may be necessary to possibly derive wall-crossing formulae for our invariants.
\end{remark}

\section{Example}\label{sec:example}
The goal of this section is to explicitly compute the function $F^{n_1,n_2}_{(\textbf{d}^r,\textbf{d}^l),\textbf{c},g}(\textbf{x},\textbf{y})$ in the case $n_1=2$, $n_2=1$, $\textbf{c}^r=(k,0)$, where $k>0$ is an integer, $\textbf{c}^l=0$, $\textbf{d}^r=(1,1)$, $\textbf{d}^l=2$ and $g\geq0$. The corresponding polytope is given in \cref{fig-longexpoly}.

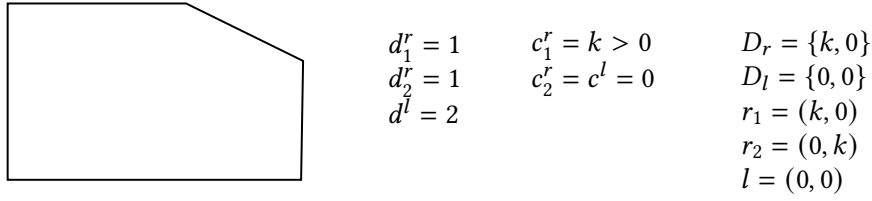
\begin{figure}

    \centering
    
    \tikzset{every picture/.style={line width=0.75pt}} 

\begin{tikzpicture}[x=0.75pt,y=0.75pt,yscale=-1,xscale=1]

\draw   (211,120) -- (210,180) -- (62,180) -- (62,91) -- (152,91) -- cycle ;

\draw (246,102.4) node [anchor=north west][inner sep=0.75pt]    {$ \begin{array}{l}
d_{1}^{r} =1\\
d_{2}^{r} =1\\
d^{l} =2
\end{array}$};
\draw (318,101.4) node [anchor=north west][inner sep=0.75pt]    {$ \begin{array}{l}
c_{1}^{r} =k >0\\
c_{2}^{r} =c^{l} =0
\end{array}$};
\draw (424,101.4) node [anchor=north west][inner sep=0.75pt]    {$ \begin{array}{l}
D_{r} =\{k,0\}\\
D_{l} =\{0,0\}\\
r_{1} =( k,0)\\
r_{2} =( 0,k)\\
l=( 0,0)
\end{array}$};

\end{tikzpicture}

    \caption{The $h$-transverse polygon associated to the data above.}
    \label{fig-longexpoly}
\end{figure}



In \cref{fig:table1,fig:table2} we list all the floor diagrams contributing to $F^{2,1}_{(1,1;2),(k,0;0),0}(x_1,x_2,y_1)$ with divergence sequence $r_1-l$ and $r_2-l$ respectively. Note that, since $c_2^r=c^l=0$ the domain will be

\begin{equation}
    \Lambda=\{(\textbf x,\textbf y)\in\mathbb Z^2\times\mathbb Z|x_1+x_2+y_1+k=0\}.
\end{equation}

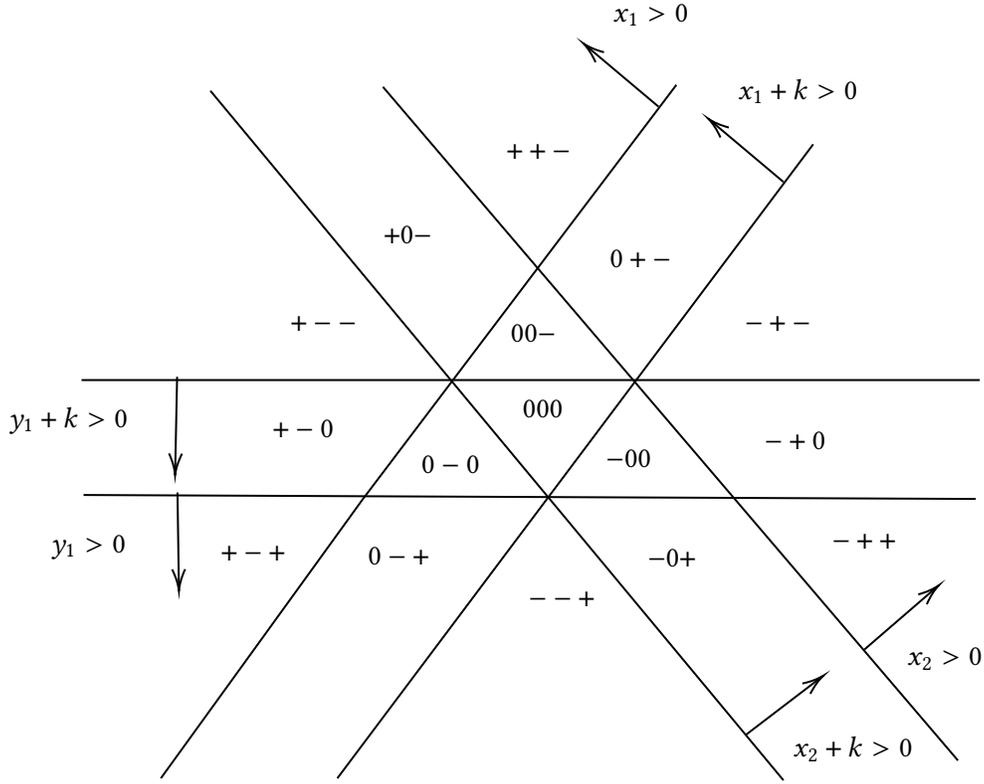
\begin{figure}
    \centering
    \tikzset{every picture/.style={line width=0.75pt}} 

\begin{tikzpicture}[x=0.75pt,y=0.75pt,yscale=-1,xscale=1]

\draw    (390,452) -- (491.71,311.3) -- (650,102) ;
\draw    (479,452) -- (719,132) ;
\draw    (350,251) -- (803,251) ;
\draw    (351,309) -- (801,311) ;
\draw    (792,443) -- (502,103) ;
\draw    (704,453) -- (415,105) ;
\draw    (641,113) -- (604.53,82.29) ;
\draw [shift={(603,81)}, rotate = 40.1] [color={rgb, 255:red, 0; green, 0; blue, 0 }  ][line width=0.75]    (10.93,-3.29) .. controls (6.95,-1.4) and (3.31,-0.3) .. (0,0) .. controls (3.31,0.3) and (6.95,1.4) .. (10.93,3.29)   ;
\draw    (704,151) -- (667.53,120.29) ;
\draw [shift={(666,119)}, rotate = 40.1] [color={rgb, 255:red, 0; green, 0; blue, 0 }  ][line width=0.75]    (10.93,-3.29) .. controls (6.95,-1.4) and (3.31,-0.3) .. (0,0) .. controls (3.31,0.3) and (6.95,1.4) .. (10.93,3.29)   ;
\draw    (744.83,387) -- (780.91,355.83) ;
\draw [shift={(782.42,354.53)}, rotate = 139.18] [color={rgb, 255:red, 0; green, 0; blue, 0 }  ][line width=0.75]    (10.93,-3.29) .. controls (6.95,-1.4) and (3.31,-0.3) .. (0,0) .. controls (3.31,0.3) and (6.95,1.4) .. (10.93,3.29)   ;
\draw    (684.76,430.28) -- (722.64,401.33) ;
\draw [shift={(724.23,400.12)}, rotate = 142.62] [color={rgb, 255:red, 0; green, 0; blue, 0 }  ][line width=0.75]    (10.93,-3.29) .. controls (6.95,-1.4) and (3.31,-0.3) .. (0,0) .. controls (3.31,0.3) and (6.95,1.4) .. (10.93,3.29)   ;
\draw    (398.2,249.35) -- (397.22,297.02) ;
\draw [shift={(397.18,299.02)}, rotate = 271.18] [color={rgb, 255:red, 0; green, 0; blue, 0 }  ][line width=0.75]    (10.93,-3.29) .. controls (6.95,-1.4) and (3.31,-0.3) .. (0,0) .. controls (3.31,0.3) and (6.95,1.4) .. (10.93,3.29)   ;
\draw    (398.39,307.74) -- (399.09,355.42) ;
\draw [shift={(399.12,357.42)}, rotate = 269.16] [color={rgb, 255:red, 0; green, 0; blue, 0 }  ][line width=0.75]    (10.93,-3.29) .. controls (6.95,-1.4) and (3.31,-0.3) .. (0,0) .. controls (3.31,0.3) and (6.95,1.4) .. (10.93,3.29)   ;

\draw (563,130.4) node [anchor=north west][inner sep=0.75pt]    {$++-$};
\draw (615,183.4) node [anchor=north west][inner sep=0.75pt]    {$0+-$};
\draw (683,217.4) node [anchor=north west][inner sep=0.75pt]    {$-+-$};
\draw (693,275.4) node [anchor=north west][inner sep=0.75pt]    {$-+0$};
\draw (571,259.4) node [anchor=north west][inner sep=0.75pt]    {$000$};
\draw (565,221.4) node [anchor=north west][inner sep=0.75pt]    {$00-$};
\draw (520,287.4) node [anchor=north west][inner sep=0.75pt]    {$0-0$};
\draw (613,284.4) node [anchor=north west][inner sep=0.75pt]    {$-00$};
\draw (501,171.4) node [anchor=north west][inner sep=0.75pt]    {$+0-$};
\draw (454,217.4) node [anchor=north west][inner sep=0.75pt]    {$+--$};
\draw (445,269.4) node [anchor=north west][inner sep=0.75pt]    {$+-0$};
\draw (419,333.4) node [anchor=north west][inner sep=0.75pt]    {$+-+$};
\draw (493,333.4) node [anchor=north west][inner sep=0.75pt]    {$0-+$};
\draw (574,356.4) node [anchor=north west][inner sep=0.75pt]    {$--+$};
\draw (634,334.4) node [anchor=north west][inner sep=0.75pt]    {$-0+$};
\draw (727,327.4) node [anchor=north west][inner sep=0.75pt]    {$-++$};
\draw (617,59.4) node [anchor=north west][inner sep=0.75pt]    {$x_{1}  >0$};
\draw (680,97.4) node [anchor=north west][inner sep=0.75pt]    {$x_{1} +k >0$};
\draw (765.26,384.5) node [anchor=north west][inner sep=0.75pt]    {$x_{2}  >0$};
\draw (707.69,429.26) node [anchor=north west][inner sep=0.75pt]    {$x_{2} +k >0$};
\draw (311.19,263.02) node [anchor=north west][inner sep=0.75pt]    {$y_{1} +k >0$};
\draw (332.4,327.63) node [anchor=north west][inner sep=0.75pt]    {$y_{1}  >0$};

\end{tikzpicture}

    \caption{The chamber complex for $F^{2,1}_{(1,1;2),(k,0;0),0}(x_1,x_2,y_1)$}
    \label{fig:chambers}
\end{figure}

The following planes provide the subdivision of $\Lambda$ in sixteen chambers, see \cref{fig:chambers}:

\begin{equation}
    x_1=0,\> x_1+k=0,\> x_2=0,\> x_2+k=0,\> y_1=0,\> y_1+k=0.
\end{equation}

\begin{remark}
We note that, even though we do not work with a Hirzebruch surface, the chamber complex we obtain coincides with the one in \cite[Section 6]{ardila2017double}. This is due to the fact that the polygon in \cref{fig-longexpoly} only differs from the polygon of a Hirzebruch surface by a vertical edge.

\end{remark}

As in \cite{ardila2017double}, we label each chamber with a triple $s_{x_1}s_{x_2}s_{y_1}$ where each $s_i$ is $+,0$ or $-$ according to weather the corresponding variable is $>0$, between $-k$ and $0$, or $<-k$ respectively. For instance, the chamber $0+-$ is given by the inequalities:

\begin{equation}
    x_1+k>0>x_1,\> x_2+k>x_2>0,\> y_1<y_1+k<0.
\end{equation}

Since $F^{2,1}_{(1,1;2),(k,0;0),g}(x_1,x_2,y_1)=F^{2,1}_{(1,1;2),(k,0;0),g}(x_2,x_1,y_1)$, it is sufficient to compute this function for $x_1\geq x_2$, therefore we consider just ten of the sixteen chambers and the corresponding polynomials are listed in \cref{fig:polynomial}. The polynomials in the remaining six chambers can be obtained by symmetry.

    
{\footnotesize \begin{table}
        \centering
        
 \scalebox{0.7}{
}
        \vspace{\baselineskip}

            \caption{The floor diagrams with divergence sequence $r_2-l$ that contribute to $F^{2,1}_{(1,1;2),(k,0;0),0}(x_1,x_2,y_1)$}
    \label{fig:table2}
        \end{table}

Let us discuss the case $g=0$. The graphs listed in \cref{fig:table1,fig:table2} contribute to $F^{2,1}_{(1,1;2),(k,0;0),0}(x_1,x_2,y_1)$ and they are obtained by a careful analysis of the weights.

\begin{example}
    Take the graphs $B2$ and $B'2$. Note that the graphs in $B2$ and $B'2$ are the same, but we have different divergence sequences and for this reason we have different weights:
    
    \begin{equation}
        \tikzset{every picture/.style={line width=0.75pt}} 
\begin{tikzpicture}[x=0.52pt,y=0.52pt,yscale=-1,xscale=1]

\draw   (38,45.5) .. controls (38,40.81) and (41.81,37) .. (46.5,37) .. controls (51.19,37) and (55,40.81) .. (55,45.5) .. controls (55,50.19) and (51.19,54) .. (46.5,54) .. controls (41.81,54) and (38,50.19) .. (38,45.5) -- cycle ;
\draw  [fill={rgb, 255:red, 0; green, 0; blue, 0 }  ,fill opacity=1 ] (112,45.5) .. controls (112,40.81) and (115.81,37) .. (120.5,37) .. controls (125.19,37) and (129,40.81) .. (129,45.5) .. controls (129,50.19) and (125.19,54) .. (120.5,54) .. controls (115.81,54) and (112,50.19) .. (112,45.5) -- cycle ;
\draw  [fill={rgb, 255:red, 128; green, 128; blue, 128 }  ,fill opacity=1 ] (173,45.5) .. controls (173,40.81) and (176.81,37) .. (181.5,37) .. controls (186.19,37) and (190,40.81) .. (190,45.5) .. controls (190,50.19) and (186.19,54) .. (181.5,54) .. controls (176.81,54) and (173,50.19) .. (173,45.5) -- cycle ;
\draw   (232,45.5) .. controls (232,40.81) and (235.81,37) .. (240.5,37) .. controls (245.19,37) and (249,40.81) .. (249,45.5) .. controls (249,50.19) and (245.19,54) .. (240.5,54) .. controls (235.81,54) and (232,50.19) .. (232,45.5) -- cycle ;
\draw  [fill={rgb, 255:red, 0; green, 0; blue, 0 }  ,fill opacity=1 ] (292,45.5) .. controls (292,40.81) and (295.81,37) .. (300.5,37) .. controls (305.19,37) and (309,40.81) .. (309,45.5) .. controls (309,50.19) and (305.19,54) .. (300.5,54) .. controls (295.81,54) and (292,50.19) .. (292,45.5) -- cycle ;
\draw   (367,44.5) .. controls (367,39.81) and (370.81,36) .. (375.5,36) .. controls (380.19,36) and (384,39.81) .. (384,44.5) .. controls (384,49.19) and (380.19,53) .. (375.5,53) .. controls (370.81,53) and (367,49.19) .. (367,44.5) -- cycle ;
\draw  [dash pattern={on 4.5pt off 4.5pt}]  (73,9) -- (73,90) ;
\draw  [dash pattern={on 4.5pt off 4.5pt}]  (345,10) -- (345,91) ;
\draw    (55,45.5) -- (112,45.5) ;
\draw    (129,45.5) -- (173,45.5) ;
\draw    (309,45.5) -- (367,44.5) ;
\draw    (190,45.5) .. controls (230,15.5) and (257,16) .. (292,45.5) ;
\draw    (129,45.5) .. controls (164,76) and (198,76) .. (232,45.5) ;
\draw   (427,45.5) .. controls (427,40.81) and (430.81,37) .. (435.5,37) .. controls (440.19,37) and (444,40.81) .. (444,45.5) .. controls (444,50.19) and (440.19,54) .. (435.5,54) .. controls (430.81,54) and (427,50.19) .. (427,45.5) -- cycle ;
\draw  [fill={rgb, 255:red, 0; green, 0; blue, 0 }  ,fill opacity=1 ] (501,45.5) .. controls (501,40.81) and (504.81,37) .. (509.5,37) .. controls (514.19,37) and (518,40.81) .. (518,45.5) .. controls (518,50.19) and (514.19,54) .. (509.5,54) .. controls (504.81,54) and (501,50.19) .. (501,45.5) -- cycle ;
\draw  [fill={rgb, 255:red, 128; green, 128; blue, 128 }  ,fill opacity=1 ] (562,45.5) .. controls (562,40.81) and (565.81,37) .. (570.5,37) .. controls (575.19,37) and (579,40.81) .. (579,45.5) .. controls (579,50.19) and (575.19,54) .. (570.5,54) .. controls (565.81,54) and (562,50.19) .. (562,45.5) -- cycle ;
\draw   (621,45.5) .. controls (621,40.81) and (624.81,37) .. (629.5,37) .. controls (634.19,37) and (638,40.81) .. (638,45.5) .. controls (638,50.19) and (634.19,54) .. (629.5,54) .. controls (624.81,54) and (621,50.19) .. (621,45.5) -- cycle ;
\draw  [fill={rgb, 255:red, 0; green, 0; blue, 0 }  ,fill opacity=1 ] (681,45.5) .. controls (681,40.81) and (684.81,37) .. (689.5,37) .. controls (694.19,37) and (698,40.81) .. (698,45.5) .. controls (698,50.19) and (694.19,54) .. (689.5,54) .. controls (684.81,54) and (681,50.19) .. (681,45.5) -- cycle ;
\draw   (756,44.5) .. controls (756,39.81) and (759.81,36) .. (764.5,36) .. controls (769.19,36) and (773,39.81) .. (773,44.5) .. controls (773,49.19) and (769.19,53) .. (764.5,53) .. controls (759.81,53) and (756,49.19) .. (756,44.5) -- cycle ;
\draw  [dash pattern={on 4.5pt off 4.5pt}]  (462,9) -- (462,90) ;
\draw  [dash pattern={on 4.5pt off 4.5pt}]  (734,10) -- (734,91) ;
\draw    (444,45.5) -- (501,45.5) ;
\draw    (518,45.5) -- (562,45.5) ;
\draw    (698,45.5) -- (756,44.5) ;
\draw    (579,45.5) .. controls (619,15.5) and (646,16) .. (681,45.5) ;
\draw    (518,45.5) .. controls (553,76) and (587,76) .. (621,45.5) ;

\draw (82,23.4) node [anchor=north west][inner sep=0.75pt]    {$x_{1}$};
\draw (474,24.4) node [anchor=north west][inner sep=0.75pt]    {$x_{1}$};
\draw (314,21.4) node [anchor=north west][inner sep=0.75pt]    {$-x_{2}$};
\draw (228,4.4) node [anchor=north west][inner sep=0.75pt]    {$-x_{2}$};
\draw (138,24.4) node [anchor=north west][inner sep=0.75pt]    {$-x_{2}$};
\draw (705,23.4) node [anchor=north west][inner sep=0.75pt]    {$-x_{2}$};
\draw (165,71.4) node [anchor=north west][inner sep=0.75pt]    {$-y_{1}$};
\draw (558,70.4) node [anchor=north west][inner sep=0.75pt]    {$-y_{1}$};
\draw (512,20.4) node [anchor=north west][inner sep=0.75pt]    {$-x_{2} -k$};
\draw (598,2.4) node [anchor=north west][inner sep=0.75pt]    {$-x_{2} -k$};
\draw (2,36.4) node [anchor=north west][inner sep=0.75pt]    {$B2$};
\draw (803,32.4) node [anchor=north west][inner sep=0.75pt]    {$B'2$};

\end{tikzpicture}
    \end{equation}
    Therefore, the graph $B2$ contributes to $F^{2,1}_{(1,1;2),(k,0;0),0}(x_1,x_2,y_1)$ with weight $(-y_1)x_2^2$ as long as $x_1>0,x_2,y_1<0$; that is, in chambers $+0-,+--$ and $+-0$. On the other hand, the graph $B'2$ contributes to $F^{2,1}_{(1,1;2),(k,0;0),0}(x_1,x_2,y_1)$ with weight $(-y_1)(x_2+k)^2$ as long as $x_1>0,x_2<x_2+k<0,y_1<0$; that is, in chambers $+--$ and $+-0$.
\end{example}

In general, all the graphs in \cref{fig:table1,fig:table2} contribute to $F^{2,1}_{(1,1;2),(k,0;0),0}(x_1,x_2,y_1)/|y_1|$ with weight $(w+k)^2$, where $w\in\{0,x_1,x_2,x_1-k,x_2-k,y_1,y_1-k\}$.

\begin{remark}
    For each graph in rows from seven to thirteen in \cref{fig:table1} and for each graph in rows from five to nine in \cref{fig:table2} (i.e. when $x_1$ and $x_2$ have the same sign), there are a priori two different possibilities of labeling the vertices in $L$ or $R$, respectively, with $\tilde q_1$ and $\tilde q_2$ or $q_1$ and $q_2$. The two corresponding floor diagrams are the same for graphs in rows eight, ten, eleven and twelve in \cref{fig:table1} and in rows six, seven and eight in \cref{fig:table2}, and they are different for graphs in rows seven, nine and thirteen in \cref{fig:table1} and in rows five and nine in \cref{fig:table2}.
\end{remark}

    \begin{table}
        \centering
        
\begin{tabular}{|p{0.14\textwidth}|p{0.29\textwidth}|p{0.56\textwidth}|}
\hline 
 \begin{center}
\textbf{Chamber}
\end{center}
 & \begin{center}
\textbf{Graphs ($g=0$)}
\end{center}
 & \begin{center}
$\displaystyle F_{( 1,1;2) ,( k,0;0) ,g}^{2,1}( x_{1} ,x_{2} ,y_{1}) /|y_{1} |$
\end{center}
 \\
\hline 
 \begin{center}
$++-$
\end{center}
 & \begin{center}
A'5,A'5,A7,A7,A8
\end{center}
 & \begin{center}
$\begin{array}{cc}
\displaystyle
\Gamma ( y_{1} -k) +\Gamma ( y_{1}) +\Gamma ( x_{1}) +\\
 +\Gamma ( x_{1} -k) +\Gamma ( x_{2}) +\Gamma ( x_{2} -k)\end{array}$
\end{center}
 \\
\hline 
 \begin{center}
$+0-$
\end{center}
 & {\begin{center}
A1,A'1,A2,B2,C2,\\A3,A'3,A4,A5\end{center}
}
 & \begin{center}
$\displaystyle  \begin{array}{{>{\displaystyle}l}}
\Gamma ( x_{1}) +\Gamma ( x_{1} -k) +( g+3) \Gamma ( x_{2} -k) +\\
+\Gamma ( y_{1} -k) +\Gamma ( y_{1}) +\Gamma ( x_{2}) +\Gamma ( 0)
\end{array}$
\end{center}
 \\
\hline 
 \begin{center}
$+--$
\end{center}
 & {\begin{center}
A1,A'1.A2,A'2,B2,\\B'2,C2,C'2,A3,A'3,\\A5
\end{center}
} & \begin{center}
$\begin{array}{cc}
\Gamma ( x_{1}) +\Gamma ( x_{1} -k) +( g+3) \Gamma ( x_{2} -k) +\\
+( g+3) \Gamma ( x_{2}) +\Gamma ( y_{1} -k) +\Gamma ( y_{1}) +\Gamma ( 0)
\end{array}$
\end{center}
 \\
\hline 
 \begin{center}
$00-$
\end{center}
 &{\begin{center}
A'8,A9,A9,A10,A12,\\A13,A13,B13,B13,C13,\\C13
\end{center}
} & \begin{center}
$\displaystyle  \begin{array}{{>{\displaystyle}l}}
\Gamma ( x_{1}) +\Gamma ( x_{2}) +\Gamma ( 0) +\Gamma ( y_{1} -k) +\Gamma ( y_{1}) +\\
+( g+3) \Gamma ( x_{2} -k) +( g+3) \Gamma ( x_{1} -k)
\end{array}$
\end{center}
 \\
\hline 
 \begin{center}
$+-0$
\end{center}
 & {\begin{center}
A1,A'1,A2,A'2,B2,\\B'2,C2,C'2,A3,A5,\\A6,B6,C6
\end{center}
} & \begin{center}
$\displaystyle  \begin{array}{{>{\displaystyle}l}}
\Gamma ( x_{1}) +\Gamma ( x_{1} -k) +( g+3) \Gamma ( x_{2} -k) +\\
+( g+3) \Gamma ( x_{2}) +\Gamma ( y_{1} -k) +\Gamma ( 0) +\\
+( g+3) \Gamma ( y_{1})
\end{array}$
\end{center}
 \\
\hline 
 \begin{center}
$0-0$
\end{center}
 & {\begin{center}
A9,A'9,B'9,C'9,A10,\\A11,B11,C11,A12,A13,\\A13,B13,B13,C13,C13
\end{center}
} & \begin{center}
$\displaystyle  \begin{array}{{>{\displaystyle}l}}
\Gamma ( x_{1}) +\Gamma ( 0) +( g+3) \Gamma ( y_{1}) +\Gamma ( y_{1} -k) +\\
+( g+3) \Gamma ( x_{2} -k) +( g+3) \Gamma ( x_{1} -k) +\\
+( g+3) \Gamma ( x_{2})
\end{array}$
\end{center}
 \\
\hline 
 \begin{center}
$000$
\end{center}
 & {\begin{center}
A9,A9,A10,A11,B11,\\C11,A12,A13,A13,B13,\\B13,C13,C13
\end{center}
} & \begin{center}
$\displaystyle  \begin{array}{{>{\displaystyle}l}}
\Gamma ( x_{1}) +\Gamma ( x_{2}) +\Gamma ( 0) +( g+3) \Gamma ( y_{1}) +\\
+\Gamma ( y_{1} -k) +( g+3) \Gamma ( 0) +\Gamma ( y_{1}) +\Gamma ( y_{1} -k)
\end{array}$
\end{center}
 \\
\hline 
 \begin{center}
$+-+$
\end{center}
 & {\begin{center}
B1,B'1,C1,C'1,D1,\\D'1,D2,D'2,D'4,B5,\\C5,D5,D6
\end{center}
} & \begin{center}
$\displaystyle  \begin{array}{{>{\displaystyle}l}}
( g+3) \Gamma ( x_{1}) +( g+3) \Gamma ( x_{1} -k) +\Gamma ( x_{2} -k) +\\
+\Gamma ( x_{2}) +( g+3) \Gamma ( 0) +\Gamma ( y_{1}) +\Gamma ( y_{1} -k)
\end{array}$
\end{center}
 \\
\hline 
 \begin{center}
$0-+$
\end{center}
 & {\begin{center}
D'7,B9,C9,D9,D'9,\\B10,C10,D10,D11,D13,\\D13
\end{center}
} & \begin{center}
$\displaystyle  \begin{array}{{>{\displaystyle}l}}
( g+3) \Gamma ( x_{1}) +( g+3) \Gamma ( 0) +\Gamma ( y_{1}) +\\
+\Gamma ( y_{1} -k) +\Gamma ( x_{2}) +\Gamma ( x_{2} -k) +\Gamma ( x_{1} -k)
\end{array}$
\end{center}
 \\
\hline 
 \begin{center}
$--+$
\end{center}
 & {\begin{center}
D'7,D'9,D'9,B10,C10,\\D10,D11,D13,D13
\end{center}
} & \begin{center}
$\displaystyle  \begin{array}{{>{\displaystyle}l}}
( g+3) \Gamma ( 0) +\Gamma ( y_{1}) +\Gamma ( y_{1} -k) +\Gamma ( x_{2}) +\\
+\Gamma ( x_{2} -k) +\Gamma ( x_{1}) +\Gamma ( x_{1} -k)
\end{array}$
\end{center}
 \\
 \hline
\end{tabular}\vspace{\baselineskip}

    \caption{The double Gromov-Witten invariants $F^{2,1}_{(1,1;2),(k,0;0),g}(x_1,x_2,y_1)$}
    \label{fig:polynomial}
        
        \end{table}
        
Let us discuss the case $g>0$. In each graph, we need to replace the grey vertex and its two incident edges by $g+1$ grey vertices and the corresponding $2(g+1)$ edges. The position of an intermediate white vertex can be chosen among the $g+1$ grey vertices; there are $g+3$ choices. This gives rise to the factor $g+3$ in \cref{fig:polynomial}. For example, in chamber $+--$ and genus $g=0$ the graphs $A2,B2,C2$ are isomorphic as unoriented graphs, and they account the three possible positions of the white vertex in $C$ relative to the black and grey vertices.

Suppose $w$ to be the weight of a black-grey edge in a genus $0$ graph. In the genus $g$ graph, that total weight $w$ has to be distributed among $g+1$ weights. Therefore the resulting contribution is

\begin{equation}
    \Gamma_g(w)=\sum_{w_1+\dots+w_{g+1}=w}\prod_{i=1}^{g+1}w_i^2
\end{equation}
where $w_1,\dots,w_{g+1}$ are positive integers. The function $\Gamma$ in \cref{fig:polynomial} is given by

\begin{equation}
    \Gamma(w)=\Gamma_g(|w+k|).
\end{equation}
{Now, we want to show through explicit computations the result in \cref{thm2}. Let us consider the chamber $++-$ and $g=1$ and call $G_1$ the underlying graph of $\mathcal{D}_1,\mathcal{D}_2,\mathcal{D}_3,\mathcal{D}_4$ and $G_2$ the underlying graph of $\mathcal D_5$ (see \cref{fig:chamber++-}). We get}

\begin{figure}[H]
    \centering
    \tikzset{every picture/.style={line width=0.75pt}} 

\begin{tikzpicture}[x=0.52pt,y=0.52pt,yscale=-1,xscale=1]

\draw   (83,75) .. controls (83,71.13) and (86.13,68) .. (90,68) .. controls (93.87,68) and (97,71.13) .. (97,75) .. controls (97,78.87) and (93.87,82) .. (90,82) .. controls (86.13,82) and (83,78.87) .. (83,75) -- cycle ;
\draw   (83,16) .. controls (83,12.13) and (86.13,9) .. (90,9) .. controls (93.87,9) and (97,12.13) .. (97,16) .. controls (97,19.87) and (93.87,23) .. (90,23) .. controls (86.13,23) and (83,19.87) .. (83,16) -- cycle ;
\draw  [fill={rgb, 255:red, 0; green, 0; blue, 0 }  ,fill opacity=1 ] (143,46) .. controls (143,42.13) and (146.13,39) .. (150,39) .. controls (153.87,39) and (157,42.13) .. (157,46) .. controls (157,49.87) and (153.87,53) .. (150,53) .. controls (146.13,53) and (143,49.87) .. (143,46) -- cycle ;
\draw  [fill={rgb, 255:red, 128; green, 128; blue, 128 }  ,fill opacity=1 ] (203,46) .. controls (203,42.13) and (206.13,39) .. (210,39) .. controls (213.87,39) and (217,42.13) .. (217,46) .. controls (217,49.87) and (213.87,53) .. (210,53) .. controls (206.13,53) and (203,49.87) .. (203,46) -- cycle ;
\draw  [fill={rgb, 255:red, 128; green, 128; blue, 128 }  ,fill opacity=1 ] (264,46) .. controls (264,42.13) and (267.13,39) .. (271,39) .. controls (274.87,39) and (278,42.13) .. (278,46) .. controls (278,49.87) and (274.87,53) .. (271,53) .. controls (267.13,53) and (264,49.87) .. (264,46) -- cycle ;
\draw  [fill={rgb, 255:red, 0; green, 0; blue, 0 }  ,fill opacity=1 ] (323,46) .. controls (323,42.13) and (326.13,39) .. (330,39) .. controls (333.87,39) and (337,42.13) .. (337,46) .. controls (337,49.87) and (333.87,53) .. (330,53) .. controls (326.13,53) and (323,49.87) .. (323,46) -- cycle ;
\draw   (383,46) .. controls (383,42.13) and (386.13,39) .. (390,39) .. controls (393.87,39) and (397,42.13) .. (397,46) .. controls (397,49.87) and (393.87,53) .. (390,53) .. controls (386.13,53) and (383,49.87) .. (383,46) -- cycle ;
\draw  [dash pattern={on 4.5pt off 4.5pt}]  (120,6) -- (120,87) ;
\draw  [dash pattern={on 4.5pt off 4.5pt}]  (421,6) -- (421,87) ;
\draw    (97,16) -- (143,46) ;
\draw    (337,46) -- (383,46) ;
\draw    (157,46) -- (203,46) ;
\draw    (97,75) .. controls (171,93) and (311,84) .. (323,46) ;
\draw    (217,46) .. controls (257,16) and (286,15) .. (323,46) ;
\draw    (157,46) .. controls (183,71) and (240,70) .. (264,46) ;
\draw    (278,46) -- (323,46) ;
\draw   (83,180) .. controls (83,176.13) and (86.13,173) .. (90,173) .. controls (93.87,173) and (97,176.13) .. (97,180) .. controls (97,183.87) and (93.87,187) .. (90,187) .. controls (86.13,187) and (83,183.87) .. (83,180) -- cycle ;
\draw   (83,121) .. controls (83,117.13) and (86.13,114) .. (90,114) .. controls (93.87,114) and (97,117.13) .. (97,121) .. controls (97,124.87) and (93.87,128) .. (90,128) .. controls (86.13,128) and (83,124.87) .. (83,121) -- cycle ;
\draw  [fill={rgb, 255:red, 0; green, 0; blue, 0 }  ,fill opacity=1 ] (143,151) .. controls (143,147.13) and (146.13,144) .. (150,144) .. controls (153.87,144) and (157,147.13) .. (157,151) .. controls (157,154.87) and (153.87,158) .. (150,158) .. controls (146.13,158) and (143,154.87) .. (143,151) -- cycle ;
\draw  [fill={rgb, 255:red, 128; green, 128; blue, 128 }  ,fill opacity=1 ] (203,151) .. controls (203,147.13) and (206.13,144) .. (210,144) .. controls (213.87,144) and (217,147.13) .. (217,151) .. controls (217,154.87) and (213.87,158) .. (210,158) .. controls (206.13,158) and (203,154.87) .. (203,151) -- cycle ;
\draw  [fill={rgb, 255:red, 128; green, 128; blue, 128 }  ,fill opacity=1 ] (264,151) .. controls (264,147.13) and (267.13,144) .. (271,144) .. controls (274.87,144) and (278,147.13) .. (278,151) .. controls (278,154.87) and (274.87,158) .. (271,158) .. controls (267.13,158) and (264,154.87) .. (264,151) -- cycle ;
\draw  [fill={rgb, 255:red, 0; green, 0; blue, 0 }  ,fill opacity=1 ] (323,151) .. controls (323,147.13) and (326.13,144) .. (330,144) .. controls (333.87,144) and (337,147.13) .. (337,151) .. controls (337,154.87) and (333.87,158) .. (330,158) .. controls (326.13,158) and (323,154.87) .. (323,151) -- cycle ;
\draw   (383,151) .. controls (383,147.13) and (386.13,144) .. (390,144) .. controls (393.87,144) and (397,147.13) .. (397,151) .. controls (397,154.87) and (393.87,158) .. (390,158) .. controls (386.13,158) and (383,154.87) .. (383,151) -- cycle ;
\draw  [dash pattern={on 4.5pt off 4.5pt}]  (120,111) -- (120,192) ;
\draw  [dash pattern={on 4.5pt off 4.5pt}]  (421,111) -- (421,192) ;
\draw    (97,121) -- (143,151) ;
\draw    (337,151) -- (383,151) ;
\draw    (157,151) -- (203,151) ;
\draw    (97,180) .. controls (171,198) and (311,189) .. (323,151) ;
\draw    (217,151) .. controls (257,121) and (286,120) .. (323,151) ;
\draw    (157,151) .. controls (183,176) and (240,175) .. (264,151) ;
\draw    (278,151) -- (323,151) ;
\draw   (549,74) .. controls (549,70.13) and (552.13,67) .. (556,67) .. controls (559.87,67) and (563,70.13) .. (563,74) .. controls (563,77.87) and (559.87,81) .. (556,81) .. controls (552.13,81) and (549,77.87) .. (549,74) -- cycle ;
\draw   (549,15) .. controls (549,11.13) and (552.13,8) .. (556,8) .. controls (559.87,8) and (563,11.13) .. (563,15) .. controls (563,18.87) and (559.87,22) .. (556,22) .. controls (552.13,22) and (549,18.87) .. (549,15) -- cycle ;
\draw  [fill={rgb, 255:red, 0; green, 0; blue, 0 }  ,fill opacity=1 ] (609,45) .. controls (609,41.13) and (612.13,38) .. (616,38) .. controls (619.87,38) and (623,41.13) .. (623,45) .. controls (623,48.87) and (619.87,52) .. (616,52) .. controls (612.13,52) and (609,48.87) .. (609,45) -- cycle ;
\draw  [fill={rgb, 255:red, 128; green, 128; blue, 128 }  ,fill opacity=1 ] (669,45) .. controls (669,41.13) and (672.13,38) .. (676,38) .. controls (679.87,38) and (683,41.13) .. (683,45) .. controls (683,48.87) and (679.87,52) .. (676,52) .. controls (672.13,52) and (669,48.87) .. (669,45) -- cycle ;
\draw  [fill={rgb, 255:red, 128; green, 128; blue, 128 }  ,fill opacity=1 ] (730,45) .. controls (730,41.13) and (733.13,38) .. (737,38) .. controls (740.87,38) and (744,41.13) .. (744,45) .. controls (744,48.87) and (740.87,52) .. (737,52) .. controls (733.13,52) and (730,48.87) .. (730,45) -- cycle ;
\draw  [fill={rgb, 255:red, 0; green, 0; blue, 0 }  ,fill opacity=1 ] (789,45) .. controls (789,41.13) and (792.13,38) .. (796,38) .. controls (799.87,38) and (803,41.13) .. (803,45) .. controls (803,48.87) and (799.87,52) .. (796,52) .. controls (792.13,52) and (789,48.87) .. (789,45) -- cycle ;
\draw   (849,45) .. controls (849,41.13) and (852.13,38) .. (856,38) .. controls (859.87,38) and (863,41.13) .. (863,45) .. controls (863,48.87) and (859.87,52) .. (856,52) .. controls (852.13,52) and (849,48.87) .. (849,45) -- cycle ;
\draw  [dash pattern={on 4.5pt off 4.5pt}]  (586,5) -- (586,86) ;
\draw  [dash pattern={on 4.5pt off 4.5pt}]  (887,5) -- (887,86) ;
\draw    (563,15) -- (609,45) ;
\draw    (803,45) -- (849,45) ;
\draw    (623,45) -- (669,45) ;
\draw    (563,74) .. controls (637,92) and (777,83) .. (789,45) ;
\draw    (683,45) .. controls (723,15) and (752,14) .. (789,45) ;
\draw    (623,45) .. controls (649,70) and (706,69) .. (730,45) ;
\draw    (744,45) -- (789,45) ;
\draw   (549,179) .. controls (549,175.13) and (552.13,172) .. (556,172) .. controls (559.87,172) and (563,175.13) .. (563,179) .. controls (563,182.87) and (559.87,186) .. (556,186) .. controls (552.13,186) and (549,182.87) .. (549,179) -- cycle ;
\draw   (549,120) .. controls (549,116.13) and (552.13,113) .. (556,113) .. controls (559.87,113) and (563,116.13) .. (563,120) .. controls (563,123.87) and (559.87,127) .. (556,127) .. controls (552.13,127) and (549,123.87) .. (549,120) -- cycle ;
\draw  [fill={rgb, 255:red, 0; green, 0; blue, 0 }  ,fill opacity=1 ] (609,150) .. controls (609,146.13) and (612.13,143) .. (616,143) .. controls (619.87,143) and (623,146.13) .. (623,150) .. controls (623,153.87) and (619.87,157) .. (616,157) .. controls (612.13,157) and (609,153.87) .. (609,150) -- cycle ;
\draw  [fill={rgb, 255:red, 128; green, 128; blue, 128 }  ,fill opacity=1 ] (669,150) .. controls (669,146.13) and (672.13,143) .. (676,143) .. controls (679.87,143) and (683,146.13) .. (683,150) .. controls (683,153.87) and (679.87,157) .. (676,157) .. controls (672.13,157) and (669,153.87) .. (669,150) -- cycle ;
\draw  [fill={rgb, 255:red, 128; green, 128; blue, 128 }  ,fill opacity=1 ] (730,150) .. controls (730,146.13) and (733.13,143) .. (737,143) .. controls (740.87,143) and (744,146.13) .. (744,150) .. controls (744,153.87) and (740.87,157) .. (737,157) .. controls (733.13,157) and (730,153.87) .. (730,150) -- cycle ;
\draw  [fill={rgb, 255:red, 0; green, 0; blue, 0 }  ,fill opacity=1 ] (789,150) .. controls (789,146.13) and (792.13,143) .. (796,143) .. controls (799.87,143) and (803,146.13) .. (803,150) .. controls (803,153.87) and (799.87,157) .. (796,157) .. controls (792.13,157) and (789,153.87) .. (789,150) -- cycle ;
\draw   (849,150) .. controls (849,146.13) and (852.13,143) .. (856,143) .. controls (859.87,143) and (863,146.13) .. (863,150) .. controls (863,153.87) and (859.87,157) .. (856,157) .. controls (852.13,157) and (849,153.87) .. (849,150) -- cycle ;
\draw  [dash pattern={on 4.5pt off 4.5pt}]  (586,110) -- (586,191) ;
\draw  [dash pattern={on 4.5pt off 4.5pt}]  (887,110) -- (887,191) ;
\draw    (563,120) -- (609,150) ;
\draw    (803,150) -- (849,150) ;
\draw    (623,150) -- (669,150) ;
\draw    (563,179) .. controls (637,197) and (777,188) .. (789,150) ;
\draw    (683,150) .. controls (723,120) and (752,119) .. (789,150) ;
\draw    (623,150) .. controls (649,175) and (706,174) .. (730,150) ;
\draw    (744,150) -- (789,150) ;
\draw   (294,285) .. controls (294,281.13) and (297.13,278) .. (301,278) .. controls (304.87,278) and (308,281.13) .. (308,285) .. controls (308,288.87) and (304.87,292) .. (301,292) .. controls (297.13,292) and (294,288.87) .. (294,285) -- cycle ;
\draw   (294,226) .. controls (294,222.13) and (297.13,219) .. (301,219) .. controls (304.87,219) and (308,222.13) .. (308,226) .. controls (308,229.87) and (304.87,233) .. (301,233) .. controls (297.13,233) and (294,229.87) .. (294,226) -- cycle ;
\draw  [fill={rgb, 255:red, 0; green, 0; blue, 0 }  ,fill opacity=1 ] (354,256) .. controls (354,252.13) and (357.13,249) .. (361,249) .. controls (364.87,249) and (368,252.13) .. (368,256) .. controls (368,259.87) and (364.87,263) .. (361,263) .. controls (357.13,263) and (354,259.87) .. (354,256) -- cycle ;
\draw  [fill={rgb, 255:red, 128; green, 128; blue, 128 }  ,fill opacity=1 ] (414,256) .. controls (414,252.13) and (417.13,249) .. (421,249) .. controls (424.87,249) and (428,252.13) .. (428,256) .. controls (428,259.87) and (424.87,263) .. (421,263) .. controls (417.13,263) and (414,259.87) .. (414,256) -- cycle ;
\draw  [fill={rgb, 255:red, 128; green, 128; blue, 128 }  ,fill opacity=1 ] (475,256) .. controls (475,252.13) and (478.13,249) .. (482,249) .. controls (485.87,249) and (489,252.13) .. (489,256) .. controls (489,259.87) and (485.87,263) .. (482,263) .. controls (478.13,263) and (475,259.87) .. (475,256) -- cycle ;
\draw  [fill={rgb, 255:red, 0; green, 0; blue, 0 }  ,fill opacity=1 ] (534,256) .. controls (534,252.13) and (537.13,249) .. (541,249) .. controls (544.87,249) and (548,252.13) .. (548,256) .. controls (548,259.87) and (544.87,263) .. (541,263) .. controls (537.13,263) and (534,259.87) .. (534,256) -- cycle ;
\draw   (594,256) .. controls (594,252.13) and (597.13,249) .. (601,249) .. controls (604.87,249) and (608,252.13) .. (608,256) .. controls (608,259.87) and (604.87,263) .. (601,263) .. controls (597.13,263) and (594,259.87) .. (594,256) -- cycle ;
\draw  [dash pattern={on 4.5pt off 4.5pt}]  (331,216) -- (331,297) ;
\draw  [dash pattern={on 4.5pt off 4.5pt}]  (632,216) -- (632,297) ;
\draw    (308,226) -- (354,256) ;
\draw    (548,256) -- (594,256) ;
\draw    (368,256) -- (414,256) ;
\draw    (428,256) .. controls (468,226) and (497,225) .. (534,256) ;
\draw    (368,256) .. controls (394,281) and (451,280) .. (475,256) ;
\draw    (489,256) -- (534,256) ;
\draw    (308,285) -- (354,256) ;

\draw (125,15.4) node [anchor=north west][inner sep=0.75pt]    {$x_{1}$};
\draw (202,187.4) node [anchor=north west][inner sep=0.75pt]    {$x_{1}$};
\draw (592,12.4) node [anchor=north west][inner sep=0.75pt]    {$x_{1}$};
\draw (676,189.4) node [anchor=north west][inner sep=0.75pt]    {$x_{1}$};
\draw (338,224.4) node [anchor=north west][inner sep=0.75pt]    {$x_{1}$};
\draw (198,82.4) node [anchor=north west][inner sep=0.75pt]    {$x_{2}$};
\draw (123,119.4) node [anchor=north west][inner sep=0.75pt]    {$x_{2}$};
\draw (657,80.4) node [anchor=north west][inner sep=0.75pt]    {$x_{2}$};
\draw (589,118.4) node [anchor=north west][inner sep=0.75pt]    {$x_{2}$};
\draw (337,265.4) node [anchor=north west][inner sep=0.75pt]    {$x_{2}$};
\draw (159,24.4) node [anchor=north west][inner sep=0.75pt]    {$x_{1} -w$};
\draw (248,5.4) node [anchor=north west][inner sep=0.75pt]    {$x_{1} -w$};
\draw (198,61.4) node [anchor=north west][inner sep=0.75pt]    {$w$};
\draw (290,44.4) node [anchor=north west][inner sep=0.75pt]    {$w$};
\draw (202,168.4) node [anchor=north west][inner sep=0.75pt]    {$w$};
\draw (288,147.4) node [anchor=north west][inner sep=0.75pt]    {$w$};
\draw (669,61.4) node [anchor=north west][inner sep=0.75pt]    {$w$};
\draw (755,44.4) node [anchor=north west][inner sep=0.75pt]    {$w$};
\draw (668,164.4) node [anchor=north west][inner sep=0.75pt]    {$w$};
\draw (756,147.4) node [anchor=north west][inner sep=0.75pt]    {$w$};
\draw (416,272.4) node [anchor=north west][inner sep=0.75pt]    {$w$};
\draw (503,252.4) node [anchor=north west][inner sep=0.75pt]    {$w$};
\draw (347,24.4) node [anchor=north west][inner sep=0.75pt]    {$-y_{1}$};
\draw (345,129.4) node [anchor=north west][inner sep=0.75pt]    {$-y_{1}$};
\draw (810,23.4) node [anchor=north west][inner sep=0.75pt]    {$-y_{1}$};
\draw (814,129.4) node [anchor=north west][inner sep=0.75pt]    {$-y_{1}$};
\draw (557,235.4) node [anchor=north west][inner sep=0.75pt]    {$-y_{1}$};
\draw (155,130.4) node [anchor=north west][inner sep=0.75pt]    {$x_{2} -w$};
\draw (247,107.4) node [anchor=north west][inner sep=0.75pt]    {$x_{2} -w$};
\draw (621,25.4) node [anchor=north west][inner sep=0.75pt]  [font=\scriptsize]  {$x_{1} +k-w$};
\draw (711,4.4) node [anchor=north west][inner sep=0.75pt]  [font=\scriptsize]  {$x_{1} +k-w$};
\draw (620,130.4) node [anchor=north west][inner sep=0.75pt]  [font=\scriptsize]  {$x_{2} +k-w$};
\draw (710,110.4) node [anchor=north west][inner sep=0.75pt]  [font=\scriptsize]  {$x_{2} +k-w$};
\draw (450,213.4) node [anchor=north west][inner sep=0.75pt]    {$-y_{1} +w$};
\draw (361,235.4) node [anchor=north west][inner sep=0.75pt]    {$-y_{1} +w$};
\draw (37,30.4) node [anchor=north west][inner sep=0.75pt]    {$\mathcal{D}_{1}$};
\draw (38,137.4) node [anchor=north west][inner sep=0.75pt]    {$\mathcal{D}_{2}$};
\draw (499,28.4) node [anchor=north west][inner sep=0.75pt]    {$\mathcal{D}_{3}$};
\draw (500,134.4) node [anchor=north west][inner sep=0.75pt]    {$\mathcal{D}_{4}$};
\draw (237,240.4) node [anchor=north west][inner sep=0.75pt]    {$\mathcal{D}_{5}$};

\end{tikzpicture}

    \caption{The floor diagrams of genus $1$ contributing to $F^{2,1}_{(1,1;2),(k,0;0),g}(x_1,x_2,y_1)$ in the chamber $++-$}
    \label{fig:chamber++-}
\end{figure}

\begin{align}
    F_{G_1,\textbf{c},r_2-l}(x_1,x_2,y_1)&=\sum_{w=0}^{x_1}(-y_1)(x_1-w)^2w^2=-y_1\biggl[x_1^2\sum_{w=0}^{x_1}w^2-2x_1\sum_{w=0}^{x_1}w^3+\sum_{w=0}^{x_1}w^4\biggr]\\
    &=-y_1\biggl[\frac{1}{6}x_1^3(x_1+1)(2x_1+1)-\frac{1}{2}x_1^3(x_1+1)^2+\frac{1}{30}(6x_1^5+15x_1^4+10x_1^3-x_1)\biggr]\\
    &=\frac{1}{30}y_1x_1(1-x_1^4).
\end{align}
{Analogously}

\begin{align}
    &F_{G_1,\textbf{c},r_2-l}(x_2,x_1,y_1)=\frac{1}{30}y_1x_2(1-x_2^4)\\
    &F_{G_1,\textbf{c},r_1-l}(x_1,x_2,y_1)=\frac{1}{30}y_1(x_1+k)(1-(x_1+k)^4)\\
    &F_{G_1,\textbf{c},r_1-l}(x_2,x_1,y_1)=\frac{1}{30}y_1(x_2+k)(1-(x_2+k)^4)\\
    &F_{G_2,\textbf{c},r_1-l}(x_1,x_2,y_1)=\frac{1}{30}y_1^2(y_1^4-1)
\end{align}
{Therefore}

\begin{equation}
    {F^{n_1,n_2}_{(\textbf{d}^r,\textbf{d}^l),\textbf{c},1}(x_1,x_2,y_1)=\sum_{i=1}^2(F_{G_1,\textbf{c},r_i-l}(x_1,x_2,y_1)+F_{G_1,\textbf{c},r_i-l}(x_2,x_1,y_1))+F_{G_2,\textbf{c},r_1-l}(x_1,x_2,y_1)}
\end{equation}
{We see that the degree of $F^{n_1,n_2}_{(\textbf{d}^r,\textbf{d}^l),\textbf{c},1}(x_1,x_2,y_1)$ is $n_2+3g+2a-2=6$, which is the claim of \cref{thm2}.}

{Finally, we prove that $F^{n_1,n_2}_{(\textbf{d}^r,\textbf{d}^l),\textbf{c},1}(x_1,x_2,y_1)$ is even. For the purpose of illustrating the process, we will establish that $F_{G_1,\textbf{c},r_2-l}(x_1,x_2,y_1)$ is even. The same reasoning can be applied to the remaining functions, thus proving that $F^{n_1,n_2}_{(\textbf{d}^r,\textbf{d}^l),\textbf{c},1}(x_1,x_2,y_1)$ is even. The lattice points of the flow polytope will be}

\begin{equation}
    {W_{G_1,\textbf{c},r_2-l}(x_1,x_2,y_1)=\{(x_1,x_2;x_1-w,x_1-w,w,w;-y_1)|x_1,x_2>0,y_1<0,0<w<x_1\}\cap\mathbb Z^7}
\end{equation}
{Then}

\begin{equation}
    {W_{G_1,\textbf{c},r_2-l}(-x_1,-x_2,-y_1)=\{(-x_1,-x_2;w-x_1,w-x_1,-w,-w;y_1)|x_1,x_2<0,y_1>0,x_1<w<0\}\cap\mathbb Z^7}
\end{equation}
{Hence}

\begin{align}
    F_{G_1,\textbf{c},r_2-l}(-x_1,-x_2,-y_1)&=\sum_{w=-x_1}^{0}y_1(w-x_1)^2(-w)^2=\sum_{w=-x_1}^{0}y_1(x_1-w)^2w^2\\
    &=\sum_{w=0}^{x_1}(-y_1)(x_1-w)^2w^2=F_{G_1,\textbf{c},r_2-l}(x_1,x_2,y_1)
\end{align}

\section{Double Gromov--Witten invariants via the bosonic Fock space}
\label{sec-fockspace}
In this section, we establish a connection between our invariants $N_{\textbf c,g}^{\alpha,\beta,\tilde\alpha,\tilde\beta,\bullet}(\textbf d)$ and the bosonic Fock space. More precisely, we express them as so-called \textit{matrix elements} of certain operators. For this, we first need to re-organise floor diagrams. In this, we follow the notion of floor diagram introduced in \cite{cavalieri2021counting} for the study of curves in Hirzebruch surfaces and fix the same setting as for \cref{def-floordiag}.

\begin{definition}
    Let $\mathcal{F}$ a loop-free graph with vertex set $V$, edge set $E$. There are two types of edges: bounded edges which are composed of two half-edges adjacent to different vertices and unbound edges called \textit{ends} with one flag. We call $\mathcal{F}$ a \textbf{thickened floor diagram} of multidegree $\textbf{d}$ and relative to $(\textbf{x},\textbf{y})$ if:
    \begin{itemize}
        \item Each vertex $v\in V$ carries a size $s_v$ that is either $0$ or $1$.
        \item Each half-edge may be decorated with a thickening and for each bounded edge exactly one of its half-edges is thickened.
        \item At each vertex $v$, exactly $2-2s_v$ half-edges are thickened.
        \item We have a map
        \begin{equation}
            w\colon E\to\mathbb{Z}_{>0}
        \end{equation}
        associating to each edge $e\in E$ an expansion factor $w(e)$.\\
        \item For each vertex $v$ denote by $E_v^-$ the set of incoming edges at $v$ and by $E_v^+$ the set of outgoing edges at $v$. We define the divergence of $v$ as
        \begin{equation}
            \mathrm{div}(v)=\sum_{e\in E_v^+}w(e)-\sum_{e\in E_v^-}w(e).
        \end{equation}
        Then, we require that 
        \begin{itemize}
            \item if $s_v=0$, then $\mathrm{div}(v)=0$
            \item if $s_v=1$, then $\mathrm{div}(v)=r_i-l_i$, where $v$ is the $i$-th vertex with $s_v=1$.
        \end{itemize}
        \item The sequence of expansion factors of non-thick ends is given by $\textbf{x}$, while the sequence of expansion factors of thick ends is given by $\textbf{y}$, where negative entries of $\textbf{x}$ and $\textbf{y}$ correspond to ends pointing to the left and positive entries to ends pointing to the right.
        \item The ends of $\mathcal{F}$  are labelled by the parts of $\textbf{x}$ and $\textbf{y}$.
    \end{itemize}

    The genus of $\mathcal{F}$ is the first Betti number of the underlying graph.\\
    Finally, we denote by
    \begin{equation}
        \mu(\mathcal{F})=\prod w(e),
    \end{equation}
    where the product runs over all bounded edges of $\mathcal{F}$, the \textbf{weight} of $\mathcal{F}.$
\end{definition}

\begin{example}
    Let us consider the data from \cref{example-compute_invariant}. In \cref{fig:thickened_floor_diagrams}, we provide all thickened floor diagrams satisfying these data.
    \begin{figure}
        \centering
        \tikzset{every picture/.style={line width=0.75pt}} 



        \caption{}
        \label{fig:thickened_floor_diagrams}
    \end{figure}
\end{example}

Similarly to \cref{thm4}, thickened floor diagrams compute our invariants $N_{\textbf c,g}^{\alpha,\beta,\tilde\alpha,\tilde\beta,\bullet}(\textbf d)$. More precisely, we have the following theorem which follows from the same arguments as in \cite[section 5]{cavalieri2021counting}.

\begin{theorem}\label{thm-thickfloor}
    Let $\textbf d=(d^t;\textbf d^r;\textbf d^l)$ be a vector of positive integer numbers, $g\geq0$ an integer and \textbf{x} a vector with coordinates in $\mathbb{Z}\setminus\{0\}$. We write $\alpha(\textbf x)=\alpha$ and $\tilde\alpha(\textbf x)=\tilde\alpha$. Then, for any two sequences of non-negative integer numbers $\beta=(\beta_i)_{i\geq1}$ and $\tilde\beta=(\tilde\beta_i)_{i\geq1}$ such that
    \begin{equation}
        \sum_ii(\alpha_i+\beta_i)=d^t+\sum_{i=1}^nc_i^rd_i^r-\sum_{j=1}^mc_j^ld_j^l\quad\text{and}\quad\sum_ii(\tilde\alpha_i+\tilde\beta_i)=d^t,
    \end{equation}
    one has
    \begin{equation}
        N_{\textbf c,g}^{\alpha,\beta,\tilde\alpha,\tilde\beta,\bullet}(\textbf d)=\sum_{\mathcal F}\mu(\mathcal F)
    \end{equation}
    where the sum runs over all thickened floor diagrams $\mathcal F$ of multidegree $\textbf d$, genus $g$ and relative to $(\textbf{x},\textbf{y})$ for $S(\textbf{c})$.
\end{theorem}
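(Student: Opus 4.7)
The strategy is to mirror the two-step approach taken in the proof of \cref{thm4}: first invoke a correspondence theorem to translate the classical enumerative problem into a count of tropical curves with the appropriate point and tangency conditions, and then construct a weight-preserving bijection between the relevant tropical curves and thickened floor diagrams. The only substantive difference from \cref{thm4} is that we now allow reducible classical curves (and hence disconnected tropical curves), which is what corresponds to dropping the connectedness requirement in the underlying combinatorial object. Hence the first step is essentially free: Shustin's correspondence theorem from \cite{shustin2012tropical}, which was used in the proof of \cref{thm4}, applies verbatim when one drops connectedness on both sides, so that $N_{\textbf{c},g}^{\alpha,\beta,\tilde\alpha,\tilde\beta,\bullet}(\textbf d)$ is expressed as a weighted count of (possibly disconnected) tropical curves in $S(\textbf c)$ of the correct Newton polygon and tangency data.

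The second step is to establish a bijection between these tropical curves and thickened floor diagrams, preserving the Mikhalkin--Shustin weight. The plan is to generalise to $h$-transverse polygons the construction given for Hirzebruch surfaces in \cite[Section~5]{cavalieri2021counting}. Starting from a tropical curve $T$, I decompose $T$ into its floors and elevators using the preferred vertical direction $(0,1)$ exactly as in the proof of \cref{thm4}. Each floor becomes a vertex $v$ of the thickened floor diagram $\mathcal F(T)$, with size $s_v=1$ if the floor contains a prescribed interior marked point and $s_v=0$ otherwise; each elevator (bounded or unbounded) becomes an edge of $\mathcal F(T)$ with expansion factor equal to its tropical weight, oriented upward. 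The thickening decoration is used to record how consecutive floors are connected through elevators: at each floor, exactly two incident half-edges are declared thick, corresponding to the unique way an elevator propagates through the floor in the floor/elevator decomposition, and the condition $2-2s_v$ follows from the fact that marked interior points force a floor to ``absorb'' the propagation.

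The divergence conditions in the definition of a thickened floor diagram are then forced by the balancing condition of $T$ at each floor, exactly as divergences were forced in the proof of \cref{thm4}; the sequences $\textbf{x}$ and $\textbf{y}$ recording the expansion factors of non-thick and thick ends respectively correspond precisely to the prescribed left-right marked points and the non-thick/thick unbounded elevator weights. To finish, one checks that the multiplicity $\mu(T)$ of the tropical curve, defined as the product of squared weights over bounded elevators times a vertex contribution, matches $\mu(\mathcal F)=\prod w(e)$ over bounded edges of $\mathcal F$ once one accounts for the re-packaging: each bounded elevator in $T$ contributes a factor $w(e)$ to the bounded-edge product on the $\mathcal F$ side, and the remaining factor $w(e)$ together with the floor-vertex multiplicities is absorbed by the combinatorial choices encoded in the thickening decoration.

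The main obstacle is the bookkeeping in the multiplicity matching: unlike the ordinary floor diagrams of \cref{def-floordiag}, where grey vertices make the correspondence with elevators transparent, thickened floor diagrams compress two half-edges at each floor into a single decoration, so one has to verify carefully that summing over all compatible thickenings of a fixed underlying graph reproduces the same total weight as summing over all ordinary floor diagrams that refine it, for every fixed Newton polygon coming from an $h$-transverse $P(\textbf c,\textbf d)$. The hyperbolic cases and degenerate slopes arising for general $h$-transverse polygons (as opposed to the Hirzebruch case) do not change the combinatorics of this matching but require verifying that the floor/elevator decomposition behaves uniformly, which can be done by reducing to the balancing condition on the normal fan recalled in \cref{rmk2}.
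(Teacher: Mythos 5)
Your proposal takes essentially the same route as the paper, whose proof is the single sentence that the statement ``follows from the same arguments as in \cite[Section~5]{cavalieri2021counting}.'' You have reconstructed what those arguments must be -- a correspondence theorem that now admits disconnected tropical curves, followed by a weight-preserving bijection between floor-decomposed tropical curves for $h$-transverse Newton polygons and thickened floor diagrams -- and you correctly identify the two adaptations needed relative to \cref{thm4}: dropping connectedness to account for reducible curves, and replacing the grey/black/white bookkeeping by sizes and thickenings.

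The one soft spot is the multiplicity bookkeeping at the end. You assert that after assigning a factor $w(e)$ per bounded elevator to the bounded-edge product $\mu(\mathcal F)=\prod w(e)$, the ``remaining factor $w(e)$ together with the floor-vertex multiplicities is absorbed by the combinatorial choices encoded in the thickening decoration.'' As written this is a heuristic, not an argument: the tropical multiplicity carries a factor $w(e)^2$ per bounded elevator plus vertex contributions from within the floors, while $\mu(\mathcal F)$ sees only a single $w(e)$ per bounded edge, and the precise accounting of where the extra factors go (choices of thickening, positions of the white vertex relative to grey vertices, etc.) is exactly the content of \cite[Section~5]{cavalieri2021counting} that one has to transplant. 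Since the paper does not spell this out either, you have at least located the right crux, but a self-contained write-up would need to make that cancellation explicit rather than declare it absorbed.
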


This interpretation of $N_{\textbf c,g}^{\alpha,\beta,\tilde\alpha,\tilde\beta,\bullet}(\textbf d)$ is the basis for the remainder of the section. Next, we introduce the bosonic Fock space and related notions.

\begin{definition}
    We define the two-dimensional Heisenberg algebra $\mathcal{H}$ as generated by $(a_n)_{n\in\mathbb{Z}}$ and $(b_n)_{n\in\mathbb{Z}}$ satisfying the commutator relations
    \begin{equation}
        [a_n,a_m]=0,\quad [b_n,b_m]=0\quad\textrm{and}\quad [a_n,b_m]=n\delta_{n,-m}
    \end{equation}
    where $\delta_{n,-m}$ is the Kronecker symbol. Moreover, we set $a_0=b_0=0$.
\end{definition}

We now consider the free action of $\mathcal{H}$ on the so-called vaccuum vector $v_{\emptyset}$ where we set $a_n\cdot v_{\emptyset}=b_n\cdot v_{\emptyset}=0$ for $n>0$. For two partitions $\mu,\nu$, we define
\begin{equation}
    v_{\mu,\nu}=\frac{1}{|\mathrm{Aut}(\mu)||\mathrm{Aut}(\nu)|}\prod_{i=1}^{\ell(\mu)}a_{-\mu_i}\prod_{j=1}^{\ell(\nu)}b_{-\nu_j}\cdot v_{\emptyset}.
\end{equation}

The vectors $v_{\mu,\nu}$ form a basis of a vector space which we call the bosonic Fock space. We also define an inner product by declaring $\langle v_{\emptyset}\mid v_{\emptyset}\rangle=1$, $a_{n}$ as the adjoint of $a_{-n}$ and $b_{n}$ as the adjoint of $b_{-n}$. Thus, we obtain
\begin{equation}
    \langle v_{\mu,\nu}\mid v_{\mu',\nu'}\rangle=\frac{1}{|\mathrm{Aut}(\mu)|}\frac{1}{|\mathrm{Aut}(\nu)|}\prod\mu_i\prod\nu_i\delta_{\mu,\nu'}\delta_{\mu',\nu}.
\end{equation}
Moreover, for an operator $M\in\mathcal{H}$, we denote $\langle v\mid M\mid w\rangle=\langle v\mid Mw\rangle$. For partitions $\mu,\nu,\mu',\nu'$, we call $\langle v_{\mu,\nu}\mid M\mid v_{\mu',\nu'}\rangle$ a \textbf{matrix element} of $M$. Moreover, in the special case $\langle v_\emptyset\mid M\mid v_\emptyset\rangle$, we call this expression a \textbf{vaccuum expectation}.

Finally, we define the \textbf{normal ordering} of a monomial of operators $\colon\prod\alpha_i\prod\beta_j\colon$ as the product, such that all $\alpha_i,\beta_j$ with $i,j<0$ appear to left of the operators with positive index.\\

We now define the operator whose matrix elements will give our invariants.

\begin{definition}
We fix two formal variables $u$ and $t$. We first define
\begin{equation}
    \textbf{a}_n\coloneqq\begin{cases}
        ua_n,\,\textrm{if}\,n<0\\
        a_n\,\textrm{if}\,n>0
    \end{cases}\quad\textrm{and}\quad 
    \textbf{b}_n\coloneqq\begin{cases}
        ub_n,\,\textrm{if}\,n<0\\
        b_n\,\textrm{if}\,n>0
    \end{cases}
\end{equation}

Moreover, we define

\begin{equation}
    M_c=u^{-1}t\sum_{m\in\mathbb{Z}_{>0}}\sum_{\substack{\textbf{z}\in(\mathbb{Z}\backslash\{\mathbf{0}\})^m\colon\\\sum z_i=-c}}\colon \textbf{a}_{z_1}\cdots\textbf{a}_{z_m}\colon
\end{equation}
and
\begin{equation}
    M=u^{-1}\sum_{m\in\mathbb{Z}_{>0}}\colon \textbf{b}_{-m}\textbf{b}_m\colon.
\end{equation}
\end{definition}

We are now ready to state our main theorem of this section. For this, we denote by $\textbf{x}^{\pm},\textbf{y}^{\pm}$ the vectors of respectively positive or negative entries of $\textbf{x},\textbf{y}$.

\begin{theorem}
\label{thm:fockspace}
    Consider the same data as in \cref{thm1}. Then, we have
    \begin{align}     N_{\textbf{c},g}^{\alpha,\beta,\tilde{\alpha},\tilde{\beta},\bullet}(\textbf{d})=&\frac{|\mathrm{Aut}(\textbf{x})||\mathrm{Aut}(\textbf{y})|}{\prod|x_i|\prod|y_j|}\\
    &\left\langle v_{\textbf{y}^-,\textbf{x}^-}\,\bigg\vert \left[t^au^{g-1+\ell(\textbf{x}^-)+\ell(\textbf{y}^-)}\right]\sum_{\sigma}\sum_{\substack{(r_1,\dots,r_a)\\(l_1,\dots,l_a)}}\sigma\left(\prod_{i=1}^a(M_{r_i-l_i}M^{a+g+\ell(\textbf{y})-1})\right)\bigg\vert \,v_{\textbf{y}^+,\textbf{x}^+}\right\rangle,
    \end{align}
    where $[t^au^{g-1+\ell(\textbf{x}^-)+\ell(\textbf{y}^-)}]$ denotes the coefficient of the monomial $t^{a}u^{g-1+\ell(\textbf{x}^-)+\ell(\textbf{y}^-)}$ and the sum runs over all permutations of $D_r$ and $D_l$. Moreover the first sum runs over all permutations $\sigma\in S_{3a+g+\ell(\textbf{y})-1}$ that respect the ordering of $1,\dots,a$ and $\sigma\left(\prod_{i=1}^aM_{r_i-l_i}M^{a+g+\ell(\textbf{y})-1}\right)$ denotes the permutations of the $2a+g+\ell(\textbf{y})-1$ factors induced by $\sigma$.
\end{theorem}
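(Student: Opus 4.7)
The plan is to build on \cref{thm-thickfloor}, which expresses $N_{\textbf{c},g}^{\alpha,\beta,\tilde{\alpha},\tilde{\beta},\bullet}(\textbf{d})$ as a weighted sum over thickened floor diagrams, and show that the matrix element on the right-hand side, when evaluated via Wick's theorem, reproduces exactly this sum. The overall strategy mirrors that of \cite[Section 5]{cavalieri2021counting} for Hirzebruch surfaces, with the generalisation consisting in allowing the divergences at the size one vertices to range over the full multisets $D_r, D_l$ dictated by an arbitrary $h$-transverse polygon.

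First I would expand the vacuum expectation value combinatorially. Normal ordering allows each factor $M_{l_i-r_i}$ or $M$ to be read as a multilinear expression in the $\textbf{a}_n$ and $\textbf{b}_n$, and by the Heisenberg commutation relations a matrix element between $v_{\textbf{x}^-,\textbf{y}^-}$ and $v_{\textbf{x}^+,\textbf{y}^+}$ is computed by summing over all complete pairings of the operators: an $\textbf{a}_{-n}$ pairs either with an $\textbf{a}_n$ in another $M_c$ (giving a factor $n$) or with one of the basis creation operators $a_{-n}$ in $v_{\textbf{x}^+,\textbf{y}^+}$, and analogously for the $\textbf{b}$'s. The prefactor $|\mathrm{Aut}(\textbf{x})||\mathrm{Aut}(\textbf{y})|/(\prod|x_i|\prod|y_j|)$ in the statement is placed precisely to absorb the $1/|\mathrm{Aut}|$ appearing in the definition of $v_{\textbf{x}^\pm,\textbf{y}^\pm}$ together with the weight $\prod\mu_i$ arising from the inner product.

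Next I would set up the bijection between pairings and thickened floor diagrams. Each $M_{l_i-r_i}$ becomes a size one vertex: all its operators are $\textbf{a}$'s, so by definition of a thickened diagram its adjacent half-edges are non-thick, and its divergence is $l_i-r_i$ as required. Summing over the permutations of $D_r$ and $D_l$ therefore assembles precisely the $a$ size one vertices of a diagram of multidegree $\textbf{d}$. Each factor $M$ becomes a size zero vertex with one thick incoming and one thick outgoing half-edge, matching $\mathrm{div}(v)=0$. The outer sum over $\sigma\in S_{2a+g+\ell(\textbf{y})-1}$ respecting the order of the $M_{l_i-r_i}$'s linearly orders the vertices from left to right, recovering the total ordering of $C$. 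Contractions between internal operators yield bounded edges, each of weight equal to the Heisenberg commutator, which reproduces the weight $\mu(\mathcal{F})=\prod_{e\text{ bounded}} w(e)$; uncontracted operators produce the ends labelled by the components of $\textbf{x}^\pm,\textbf{y}^\pm$.

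The last, and I believe most delicate, step is the bookkeeping of the variable $u$ to extract the correct genus. Every operator $M_c$ and $M$ carries a $u^{-1}$ and every creation mode $\textbf{a}_{-n}, \textbf{b}_{-n}$ carries a $u$. For a diagram $\mathcal{F}$ contributing to $N_{\textbf{c},g}^{\alpha,\beta,\tilde{\alpha},\tilde{\beta},\bullet}(\textbf{d})$, combining \cref{prop2} with the formula $g=1-|V|+|E|$ shows that the number of internal edges and ends conspire so that the total $u$-exponent equals $g-1+\ell(\textbf{x}^-)+\ell(\textbf{y}^-)$; thus the coefficient extraction $[u^{g-1+\ell(\textbf{x}^-)+\ell(\textbf{y}^-)}]$ isolates exactly the genus $g$ contribution. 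The hard part will be verifying that the automorphism factors coming from the normal ordering, the Wick sum, the permutations of the operators $M$, and the basis vector normalisations all cancel so that each isomorphism class of thickened floor diagram appears exactly once with its multiplicity $\mu(\mathcal{F})$ and no spurious overcounting; this reduces, after unwinding, to a careful counting of equivalence classes of pairings modulo the symmetries of the tuples of $M$'s, and is where the adaptation from the Hirzebruch case of \cite{cavalieri2021counting} is most technically involved.
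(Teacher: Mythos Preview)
Your proposal is correct and follows essentially the same approach as the paper: both invoke the tropical Wick theorem of \cite[Proposition~5.2]{block2016fock} and \cite[Proposition~6.6]{cavalieri2021counting} to identify the Feynman diagrams of the operator product with thickened floor diagrams, then match weights up to the end-expansion-factor normalisation and use an Euler-characteristic count to isolate the correct genus via the $u$-exponent. The paper's proof is in fact considerably terser than yours, deferring the pairing/automorphism bookkeeping you spell out to the cited references.
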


The proof follows from a tropical formulation of Wick's theorem \cite{wick1950evaluation} that may be found in \cite[Proposition 5.2]{block2016fock} and in \cite[Proposition 6.6]{cavalieri2021counting} for the version we employ. The key idea is to construct Feynman diagrams corresponding to matrix elements of monomials in the bosonic Fock space. 
The idea, also outlined after \cite[Definition 6.4]{cavalieri2021counting} is as follows: Let $m_-,m_1,\dots,m_s,m_+$ be normally ordered monomials in the operators $a_i,b_j$, such that $m_-$ contains only operators with negative index and $m_+$ only operators with positive index. Then, we associate to the monomial $P=m_+m_1\cdots m_sm_-$ a family of graphs, we call \textbf{Feynman diagrams} associated to $P$ via the following procedure:
\begin{enumerate}
    \item For each monomial $m_l$, we create a vertex $v_l$. For each $a_i$ with $i<0$ appearing in $m_l$, we create a half-edge $e$ adjacent to $v_l$ pointing to the left with expansion factor $w(e)=|i|$. For each $a_i$ with $i>0$ we do create a half-edge pointing to the right with expansion factor $w(e)=i$. Moreover, for each $b_j$ appearing in $m_l$, we create a thickened half-edge adjacent to $v_l$ again pointing to the left for $j<0$ with expansion factor $w(e)=|j|$ and to the right for $j>0$ with expansion factor $w(e)=j$.\\
    For the monomial $m_-$ and $m_+$ we create a set of unbounded half-edges of corresponding weight for each operator appearing in them. We thicken the half-edges coming from operators $b_i$.
    \item We order all pieces by moving the half-edges corresponding to $m_+$ to the left and the half-edges corresponding to $m_-$ to the right. All vertices $m_l$ are ordered linearly respecting the ordering of the indices $l$.
    \item We connect half-edges to each other according to the following rules:
    \begin{itemize}
        \item A half-edge pointing to the left is connected to a half-edge pointing to the right and the connection respects the ordering of the vertices.
        \item Connected half-edges must have the same expansion factor.
        \item All resulting edges that adjacent to two vertices have exactly one thickened half-edge. 
    \end{itemize}
\end{enumerate}

    \begin{figure}
        \centering
        \tikzset{every picture/.style={line width=0.75pt}} 

\tikzset{every picture/.style={line width=0.75pt}} 

\begin{tikzpicture}[x=0.75pt,y=0.75pt,yscale=-0.75,xscale=0.75]

\draw [line width=4.5]    (803.01,661.87) -- (724.01,662.32) ;
\draw  [fill={rgb, 255:red, 0; green, 0; blue, 0 }  ,fill opacity=1 ] (592,661.55) .. controls (592.03,665.7) and (588.69,669.07) .. (584.54,669.1) .. controls (580.4,669.12) and (577.03,665.78) .. (577,661.64) .. controls (576.98,657.5) and (580.32,654.12) .. (584.46,654.1) .. controls (588.6,654.07) and (591.98,657.41) .. (592,661.55) -- cycle ;
\draw  [fill={rgb, 255:red, 0; green, 0; blue, 0 }  ,fill opacity=1 ] (452.01,662.34) .. controls (452.03,666.48) and (448.69,669.85) .. (444.55,669.88) .. controls (440.41,669.9) and (437.03,666.56) .. (437.01,662.42) .. controls (436.98,658.28) and (440.32,654.9) .. (444.46,654.88) .. controls (448.61,654.85) and (451.98,658.19) .. (452.01,662.34) -- cycle ;
\draw  [fill={rgb, 255:red, 0; green, 0; blue, 0 }  ,fill opacity=1 ] (332,662.01) .. controls (332.02,666.15) and (328.69,669.52) .. (324.54,669.55) .. controls (320.4,669.57) and (317.02,666.23) .. (317,662.09) .. controls (316.98,657.95) and (320.32,654.57) .. (324.46,654.55) .. controls (328.6,654.52) and (331.98,657.86) .. (332,662.01) -- cycle ;
\draw  [fill={rgb, 255:red, 0; green, 0; blue, 0 }  ,fill opacity=1 ] (212,662.68) .. controls (212.03,666.82) and (208.69,670.19) .. (204.55,670.22) .. controls (200.4,670.24) and (197.03,666.9) .. (197,662.76) .. controls (196.98,658.62) and (200.32,655.24) .. (204.46,655.22) .. controls (208.6,655.19) and (211.98,658.53) .. (212,662.68) -- cycle ;
\draw    (661,660.67) -- (592,661.55) ;
\draw    (577,661.64) -- (535.15,687.37) ;
\draw    (577,661.64) -- (535.88,639.37) ;
\draw [line width=4.5]    (487,661.64) -- (452.01,662.34) ;
\draw [line width=4.5]    (437.01,662.42) -- (402.01,663.11) ;
\draw [line width=4.5]    (367,661.31) -- (332,662.01) ;
\draw [line width=4.5]    (317,662.09) -- (282,662.78) ;
\draw    (244.01,663) -- (212,662.68) ;
\draw    (197,662.76) -- (165,662.44) ;
\draw    (116.79,624.71) -- (47.79,625.59) ;
\draw [line width=4.5]    (121.23,703.68) -- (42.23,704.12) ;

\draw (753.11,669.11) node [anchor=north west][inner sep=0.75pt]    {$1$};
\draw (619.08,663.86) node [anchor=north west][inner sep=0.75pt]    {$1$};
\draw (63.68,594.96) node [anchor=north west][inner sep=0.75pt]    {$1$};
\draw (459.09,666.75) node [anchor=north west][inner sep=0.75pt]    {$1$};
\draw (415.29,666.86) node [anchor=north west][inner sep=0.75pt]  [rotate=-1]  {$1$};
\draw (72.33,709.91) node [anchor=north west][inner sep=0.75pt]    {$5$};
\draw (173.08,665.35) node [anchor=north west][inner sep=0.75pt]    {$5$};
\draw (221.09,667.08) node [anchor=north west][inner sep=0.75pt]    {$3$};
\draw (293.11,669.68) node [anchor=north west][inner sep=0.75pt]    {$3$};
\draw (341.29,667.27) node [anchor=north west][inner sep=0.75pt]  [rotate=-1]  {$3$};
\draw (546.16,678.27) node [anchor=north west][inner sep=0.75pt]    {$3$};
\draw (549.86,624.24) node [anchor=north west][inner sep=0.75pt]    {$1$};

\end{tikzpicture}

        \caption{}
        \label{fig:fragments}
    \end{figure}

    \begin{figure}
        \centering
        \tikzset{every picture/.style={line width=0.75pt}} 

\tikzset{every picture/.style={line width=0.75pt}} 

\begin{tikzpicture}[x=0.75pt,y=0.75pt,yscale=-0.75,xscale=0.75]

\draw [line width=4.5]    (803.99,883.06) -- (724.99,882.84) ;
\draw  [fill={rgb, 255:red, 0; green, 0; blue, 0 }  ,fill opacity=1 ] (593,880.97) .. controls (592.98,885.12) and (589.62,888.46) .. (585.48,888.45) .. controls (581.33,888.44) and (577.99,885.07) .. (578,880.93) .. controls (578.01,876.79) and (581.38,873.44) .. (585.52,873.45) .. controls (589.66,873.46) and (593.01,876.83) .. (593,880.97) -- cycle ;
\draw  [fill={rgb, 255:red, 0; green, 0; blue, 0 }  ,fill opacity=1 ] (453,880.58) .. controls (452.99,884.72) and (449.62,888.07) .. (445.48,888.06) .. controls (441.33,888.05) and (437.99,884.68) .. (438,880.54) .. controls (438.01,876.4) and (441.38,873.05) .. (445.52,873.06) .. controls (449.66,873.07) and (453.01,876.44) .. (453,880.58) -- cycle ;
\draw  [fill={rgb, 255:red, 0; green, 0; blue, 0 }  ,fill opacity=1 ] (333,879.25) .. controls (332.99,883.39) and (329.62,886.74) .. (325.48,886.73) .. controls (321.34,886.71) and (317.99,883.35) .. (318,879.21) .. controls (318.01,875.06) and (321.38,871.71) .. (325.52,871.73) .. controls (329.66,871.74) and (333.01,875.11) .. (333,879.25) -- cycle ;
\draw  [fill={rgb, 255:red, 0; green, 0; blue, 0 }  ,fill opacity=1 ] (213,878.91) .. controls (212.99,883.05) and (209.62,886.4) .. (205.48,886.39) .. controls (201.34,886.38) and (197.99,883.01) .. (198,878.87) .. controls (198.01,874.73) and (201.38,871.38) .. (205.52,871.39) .. controls (209.66,871.4) and (213.01,874.77) .. (213,878.91) -- cycle ;
\draw    (662,880.67) -- (593,880.97) ;
\draw    (578,880.93) -- (535.93,906.31) ;
\draw    (578,880.93) -- (537.06,858.32) ;
\draw [line width=4.5]    (488,880.18) -- (453,880.58) ;
\draw [line width=4.5]    (438,880.54) -- (403,880.94) ;
\draw [line width=4.5]    (368,878.85) -- (333,879.25) ;
\draw [line width=4.5]    (318,879.21) -- (283,879.61) ;
\draw    (245,879.5) -- (213,878.91) ;
\draw    (198,878.87) -- (166,878.28) ;
\draw    (118.11,840.15) -- (49.11,840.45) ;
\draw [line width=4.5]    (121.89,919.16) -- (42.89,918.94) ;
\draw  [dash pattern={on 4.5pt off 4.5pt}]  (724.99,882.84) -- (662,880.67) ;
\draw  [dash pattern={on 4.5pt off 4.5pt}]  (537.06,858.32) -- (488,880.18) ;
\draw  [dash pattern={on 4.5pt off 4.5pt}]  (403,880.94) .. controls (407.12,837.95) and (178.11,839.31) .. (118.11,840.15) ;
\draw  [dash pattern={on 4.5pt off 4.5pt}]  (166,878.28) -- (121.89,919.16) ;
\draw  [dash pattern={on 4.5pt off 4.5pt}]  (283,879.61) -- (245,879.5) ;
\draw  [dash pattern={on 4.5pt off 4.5pt}]  (535.93,906.31) .. controls (485.86,930.17) and (429.81,946.02) .. (368,878.85) ;

\draw (753.94,889.95) node [anchor=north west][inner sep=0.75pt]    {$1$};
\draw (619.96,883.57) node [anchor=north west][inner sep=0.75pt]    {$1$};
\draw (65.16,810.02) node [anchor=north west][inner sep=0.75pt]    {$1$};
\draw (459.95,885.12) node [anchor=north west][inner sep=0.75pt]    {$1$};
\draw (415.95,885) node [anchor=north west][inner sep=0.75pt]    {$1$};
\draw (72.84,925.04) node [anchor=north west][inner sep=0.75pt]    {$5$};
\draw (173.96,881.33) node [anchor=north west][inner sep=0.75pt]    {$5$};
\draw (221.96,883.46) node [anchor=north west][inner sep=0.75pt]    {$3$};
\draw (293.95,886.66) node [anchor=north west][inner sep=0.75pt]    {$3$};
\draw (341.95,884.79) node [anchor=north west][inner sep=0.75pt]    {$3$};
\draw (546.92,897.37) node [anchor=north west][inner sep=0.75pt]    {$3$};
\draw (551.07,843.38) node [anchor=north west][inner sep=0.75pt]    {$1$};

\end{tikzpicture}

        \caption{}
        \label{fig:Feynman}
    \end{figure}

    \begin{example}\label{example-construction}
    Let $P$ be the following monomial
    \begin{equation}
        P=(b_5a_{1})\cdot(a_{-5}a_3)\cdot(b_{-3}b_3)\cdot(b_{-1}b_1)\cdot(a_{-3}a_{-1}a_1)\cdot b_{-1}.
    \end{equation}
    The factors $m_i$ are separated by parentheses. Moreover, $m_+=b_5a_1$ and $m_-=b_{-1}$. \cref{fig:fragments} depicts the first point of the procedure outlined above, while \cref{fig:Feynman} provides the unique Feynman graph that respects the procedure. Note that after dropping all external half--edges in \cref{fig:Feynman} and flipping the graph left--to--right, we obtain the thickened floor diagram on the top-left in \cref{fig:thickened_floor_diagrams}. This illustrates the general correspondence.
\end{example}

As such the monomials in the operators $M_c$ corresponds to vertices of size $1$ and non-trivial divergence, whereas the monomials in the operator $M$ yield vertices of size $0$ and trivial divergence. Note that by the construction of thickened Floor diagrams in \cite{cavalieri2021counting}, all vertices of size $0$ are $2$-valent.

\begin{proposition}[{\cite[Proposition 5.2]{block2016fock},\cite[Proposition 6.6]{cavalieri2021counting}}]\label{prop-Wick}
    The vaccum expecation of an operator $P$ as above is equal to the weighted sum of all Feynman diagrams associated to $P$, where each diagram is weighted the product of all edges (bounded and unbounded).
\end{proposition}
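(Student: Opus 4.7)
The plan is to prove Theorem~\ref{thm:fockspace} by a transfer-matrix interpretation of the matrix element, rather than by invoking Wick's theorem as a black box. I would first apply \cref{thm-thickfloor} to write
\begin{equation}
N_{\textbf{c},g}^{\alpha,\beta,\tilde{\alpha},\tilde{\beta},\bullet}(\textbf{d}) = \sum_{\mathcal{F}} \mu(\mathcal{F}),
\end{equation}
reducing the theorem to a purely combinatorial identity between this sum and the right-hand side.

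The key viewpoint is that every thickened floor diagram $\mathcal{F}$ can be \emph{scanned from left to right}: at any vertical slice between two consecutive vertices of $\mathcal{F}$, the crossing edges form a state consisting of a multiset $\mu$ of non-thickened expansion factors together with a multiset $\nu$ of thickened ones, which I identify with the basis vector $v_{\mu,\nu}$ of the bosonic Fock space. The initial and final states of the scan are exactly $v_{\textbf{x}^-,\textbf{y}^-}$ and $v_{\textbf{x}^+,\textbf{y}^+}$, by the conventions on ends. Each vertex of $\mathcal{F}$ then corresponds to a local transition operator: a size-one vertex with divergence $c$ and incident non-thickened half-edges of signed weights $z_1,\dots,z_m$ is encoded by the monomial $\colon\!\textbf{a}_{z_1}\cdots\textbf{a}_{z_m}\!\colon$, where powers of $u$ on the negative indices account for outgoing half-edges; summing over $m$ and $\textbf{z}$ recovers $M_c$. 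A size-zero 2-valent vertex (whose half-edges are both thickened) corresponds to a single summand of $M$. Since a thickened floor diagram has $a$ size-one vertices (one per pair $(r_i,l_i)$) and, by the thickened analogue of \cref{prop2}, $a+g+\ell(\textbf{y})-1$ size-zero vertices, the operator product $\prod_{i=1}^a M_{l_i-r_i}\cdot M^{a+g+\ell(\textbf{y})-1}$ assembles one transition operator per vertex in the fixed block order of the size-one vertices.

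Next, I would evaluate the matrix element directly by using the Heisenberg relations $[a_n,a_m]=[b_n,b_m]=0$ and $[a_n,b_m]=n\delta_{n,-m}$ to move annihilation operators past creation operators. The resulting expansion is a sum of \emph{contraction patterns} in which every contraction pairs a non-thickened half-edge (coming from an $\textbf{a}$) with a thickened one (coming from a $\textbf{b}$), matching the rule that each bounded edge of a thickened floor diagram carries exactly one thickened half-edge. A contraction between $\textbf{a}_{-n}$ and $\textbf{b}_n$ (or the symmetric pair) contributes the scalar $n$, which is precisely the expansion factor assigned by $\mu$ to the resulting bounded edge, so that the product over contractions reproduces $\mu(\mathcal{F})$. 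Uncontracted operators act on the vacuum vectors, and non-vanishing forces the surviving half-edges to match $\textbf{x}$ and $\textbf{y}$ with the correct orientations and thickness, producing the unbounded ends of $\mathcal{F}$. The outer sum over permutations $\sigma\in S_{3a+g+\ell(\textbf{y})-1}$ that preserve the order of the $M_{l_i-r_i}$ factors ranges over all interleavings of size-zero vertices between the size-one ones, and the innermost sums over the permutations of $D_r$ and $D_l$ run through all slope assignments; combined with the prefactor $|\mathrm{Aut}(\textbf{x})||\mathrm{Aut}(\textbf{y})|/(\prod|x_i|\prod|y_j|)$, which cancels the automorphism and weight normalizations in the definition of $v_{\mu,\nu}$ and the inner product, each thickened floor diagram appears with total coefficient $\mu(\mathcal{F})$ exactly once.

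Finally, the variable $u$ tracks the number of negative-indexed operators used: each contraction consumes one such operator, each left-pointing end absorbs one via $v_{\textbf{x}^-,\textbf{y}^-}$, and an Euler-characteristic accounting combined with $g=1-V+E$ shows that the overall $u$-exponent of a surviving contribution equals $g-1+\ell(\textbf{x}^-)+\ell(\textbf{y}^-)$ precisely when the first Betti number of $\mathcal{F}$ is $g$; extracting this coefficient therefore restricts to genus-$g$ diagrams. The main obstacle will be the bookkeeping of the sum over $\sigma$ together with the automorphism counts: one must check carefully that the permutation sum, interacting with the normal-ordering inside $M_c$ and $M$ (which forbids contractions between half-edges of the same vertex and thereby enforces the loop-free condition), produces each isomorphism class of $\mathcal{F}$ with multiplicity exactly equal to the automorphism factor that is cancelled by the prefactor, so that each $\mathcal{F}$ is counted once.
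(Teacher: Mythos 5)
The statement you were given is the Proposition --- a Wick-type identity for the two-dimensional bosonic Fock space --- which the paper imports from the cited references and does not prove. Your proposal instead announces a proof of \cref{thm:fockspace}, a downstream result whose proof in the paper consists of invoking the Proposition and a few sentences. So there is a target mismatch. That said, the core of a proof of the Proposition does appear in your middle paragraph: commuting annihilation operators past creation operators expands the vacuum expectation into a sum over contraction patterns; only $a$--$b$ contractions survive since $[a_n,a_m]=[b_n,b_m]=0$; each surviving contraction contributes the index $n$, matching the expansion factor of the resulting edge; normal ordering inside each monomial forbids self-contractions (loop-freeness); and the commuting process forces contractions to respect the left-to-right vertex order. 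That is precisely the content of the Proposition. If you detached that paragraph from the specific operator $\sum_\sigma\sigma\bigl(\prod_i M_{l_i-r_i}\, M^{a+g+\ell(\textbf{y})-1}\bigr)$, worked with a generic normally-ordered monomial $P = m_+ m_1\cdots m_s m_-$, and matched the contraction combinatorics to the paper's three-step Feynman-diagram construction, you would have a clean and self-contained proof of the statement actually posed.

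Read instead as a proof of \cref{thm:fockspace}, your route is genuinely different from the paper's: you bypass the Proposition as a black box and derive everything directly from the Heisenberg relations, which is more transparent. The trade-off is that the bookkeeping you flag as an ``obstacle'' at the end --- showing that the $\sigma$-sum, the $|\mathrm{Aut}(\textbf{x})||\mathrm{Aut}(\textbf{y})|$ prefactor, and normal ordering together weight each isomorphism class of thickened floor diagram exactly once --- must actually be carried out, as must the Euler-characteristic argument tying the $u$-exponent to the genus (you need both $g=1-|V|+|E|$ and the fact that every vertex coming from $M$ has valence two). The paper's own proof of the Theorem also elides these points, so you are not behind it, but acknowledging a gap is not the same as closing it.
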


\begin{example}
    We compute $N_{(2;-1,0),0}^{00001,1,1,0}(1;2;1,1)$ as in \cref{example-compute_invariant}. We have that
    \begin{align}
        N_{(2;-1,0),0}^{00001,1,1,0}(1;2;1,1)&=\frac{1}{5}\langle a_1b_5v_\emptyset|[t^2u^2](2M_2MMM_3+M_2MM_3M+2M_3MMM_2+M_3MM_2M)|b_1v_\emptyset\rangle=\\
    &=\frac{1}{5}\biggl(\langle a_1b_5a_{-5}a_3b_{-3}b_3b_{-1}b_1a_{-3}a_{-1}a_1b_{-1}\rangle+\langle a_1b_5a_{-5}a_3b_{-1}b_1b_{-3}b_3a_{-3}a_{-1}a_1b_{-1}\rangle+\\
    &+\langle a_1b_5b_{-1}b_1a_{-5}a_3b_{-3}b_3a_{-3}a_{-1}a_1b_{-1}\rangle+\langle a_1b_5b_{-1}b_1a_{-5}a_{-1}a_4b_{-4}b_4a_{-4}a_1b_{-1}\rangle+\\
    &+\langle a_1b_5a_{-5}a_2b_{-2}b_2b_{-1}b_1a_{-2}a_{-1}a_1b_{-1}\rangle+\langle a_1b_5a_{-5}a_2b_{-1}b_1b_{-2}b_2a_{-2}a_{-1}a_1b_{-1}\rangle+\\
    &+\langle a_1b_5b_{-1}b_1a_{-5}a_2b_{-2}b_2a_{-2}a_{-1}a_1b_{-1}\rangle+\langle a_1b_5b_{-1}b_1a_{-1}a_{-5}a_3b_{-3}b_3a_{-3}a_1b_{-1}\rangle\biggr).
    \end{align}
    After a careful calculation, we obtain the following:
    \begin{equation}
        N_{(2;-1,0),0}^{00001,1,1,0}(1;2;1,1)=\frac{1}{5}(3\cdot45+80+3\cdot20+45)=\frac{320}{5}=64.
    \end{equation}
\end{example}

We end this section with the proof of \cref{thm:fockspace}.

\begin{proof}[Proof of \cref{thm:fockspace}]
    Clearly all Feynman diagrams are thickend Floor diagrams and the operators $M_c$ and $M$ are built, such that all thickend floor diagrams contributing to $N_{\textbf{c},g}^{\alpha,\beta,\tilde{\alpha},\tilde{\beta},\bullet}(\textbf{d})$ appear. The weights only differ by the product of the expansion factors of the ends, which is why we divide by them. A simple Euler characteristic calculation shows that all Feynman diagrams are of the right genus.
\end{proof}

\printbibliography
\end{document}